\newdimen\plusheight
\def\+{\;\lower\plusheight\hbox{$+$}\;}
\newdimen\minusheight
\def\-{\;\lower\minusheight\hbox{$-$}\;}
\newdimen\cdotsheight
\def\cds{\lower\cdotsheight\hbox{$\cdots$}}
\theoremstyle{definition}
\theoremstyle{definition}
\def\proof{\@ifnextchar[{\@oproof}{\@nproof}}
\def\@oproof[#1][#2]{\trivlist\item[\hskip\labelsep\textit{#2 Proof of Theorem 1.4.}~]\ignorespaces}
\def\@nproof{\trivlist\item[\hskip\labelsep\textit{Proof.}~]\ignorespaces}%\smartqed
\numberwithin{equation}{section}
\theoremstyle{plain}
\newtheorem{theorem}{Theorem}[section]
\newtheorem{corollary}[theorem]{Corollary}
\newtheorem{lemma}[theorem]{Lemma}
\newtheorem{claim}[theorem]{Claim}
\begin{document}

\title{Heegaard genera in congruence towers of hyperbolic 3-manifolds}
\author{BoGwang Jeon}\thanks{The author was partially supported by US NSF grant DMS-0707136}
\maketitle
\begin{abstract}
Given a closed hyperbolic 3-manifold $M$, we construct a tower of covers with increasing Heegaard genus and give an explicit lower bound on the Heegaard genus of such covers as a function of their degree. Using similar methods we prove that for any $\epsilon>0$ there exist infinitely many congruence covers $\{M_i\}$ such that, for any $x \in M$, $M_i$ contains an embbeded ball $B_x$ (with center $x$) satisfying $\text{vol}(B_x) > \left(\text{vol}(M_i)\right)^{\tfrac{1}{4}-\epsilon}$. We get the similar results for an arithmetic non-compact case.
\end{abstract}
\section{introduction}
 Let $M$ be a hyperbolic 3-manifold and $\{M_i\}$ be a collection of finite covers of $M$. The \emph{infimal Heegaard gradient} of $M$ with respect to $\{M_i\}$ is defined as:
 \begin{equation*}
 \inf_i \dfrac{\chi^h_-(M_i)}{[\pi_1(M):\pi_1(M_i)]},
 \end{equation*}
 where $\chi^h_-(M_i)$ denotes the minimal value for the negative of the Euler characteristic of a Heegaard surface in $M_i$.

 A fundamental question is whether the infimal Heegaard gradient is zero or not. This question is closely related to the potential solutions of several important conjectures in 3-manifold theory such as the virtual Haken conjecture and the virtual fibering conjecture \cite{lac}\cite{lac1}. Assuming the Lubotzky-Sarnak conjecture, a closed hyperbolic 3-manifold $M$ has a tower $\{M_i\}$ of finite covers without Property $\tau$. By a theorem of Lackenby \cite{lac} if the infimal Heegaard gradient of this tower is positive, then $M_i$ is Haken for sufficiently large $i$. According to a conjecture of Lackenby \cite{lac}, if the infimal Heegaard gradient of this tower is zero, then $M_i$ is fibered for some $i$. Thus the Heegaard gradient plays an important role in these approaches to the virtual Haken conjecture and the virtual fibering conjecture.

 When the manifold is \emph{arithmetic}, Lackenby proved that:
 \begin{theorem}\label{1.1} \cite{lac}
 Let $M$ be an arithmetic hyperbolic 3-manifold. Then there are positive constants $c$ and $C$ which depend only on $M$, such that for any congruence cover $M_i \rightarrow M$,
 \begin{equation*}
c~[\pi_1(M):\pi_1(M_i)]\leq \chi^h_-(M_i)\leq C~[\pi_1(M):\pi_1(M_i)].
 \end{equation*}
 \end{theorem}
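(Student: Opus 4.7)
The plan is to prove the upper and lower estimates by different techniques. For the upper bound I would argue purely topologically: fix a triangulation $\mathcal{T}$ of $M$ with $t$ tetrahedra, and lift it to a triangulation $\widetilde{\mathcal{T}}$ of $M_i$ with $t \cdot [\pi_1(M) : \pi_1(M_i)]$ tetrahedra. A regular neighbourhood of the $1$-skeleton of $\widetilde{\mathcal{T}}$, together with its complementary handlebody, realises a Heegaard splitting whose genus is linear in the number of edges of $\widetilde{\mathcal{T}}$, so that $\chi^h_-(M_i) \leq C \cdot [\pi_1(M) : \pi_1(M_i)]$ for a constant $C$ depending only on $\mathcal{T}$, hence only on $M$.

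The lower bound is the substantive half and couples a spectral input with a geometric one. The spectral input is Property~$(\tau)$ for congruence quotients of arithmetic lattices, a classical consequence of the theorems of Selberg, Burger--Sarnak and Clozel: there is $\lambda_0>0$ with $\lambda_1(M_i) \geq \lambda_0$ for every congruence cover $M_i$. Via Buser's inequality this converts to a uniform lower bound $h(M_i) \geq h_0 > 0$ on the Cheeger constants across the family.

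The geometric input is a min-max estimate. Let $\Sigma_0$ be a minimal-genus Heegaard surface in $M_i$, of genus $g = g(M_i)$, and use it to produce a sweep-out $\{\Sigma_t\}_{t \in [0,1]}$ of $M_i$ by surfaces isotopic to $\Sigma_0$. By continuity of volume some level $\Sigma_{t_*}$ bisects $M_i$, and the uniform Cheeger bound then forces
\[
\tfrac{1}{2} h_0 \operatorname{vol}(M_i) \;\leq\; \operatorname{area}(\Sigma_{t_*}) \;\leq\; \max_t \operatorname{area}(\Sigma_t).
\]
Minimising the right-hand side over all sweep-outs and invoking Pitts--Rubinstein min-max theory produces an embedded minimal surface of genus at most $g$ whose area realises this width; for such a surface in a hyperbolic $3$-manifold the Gauss equation yields $\operatorname{area} \leq -2\pi\chi$, which is linear in $\chi^h_-(M_i)$. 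Combining these inequalities and using $\operatorname{vol}(M_i) = \operatorname{vol}(M) \cdot [\pi_1(M):\pi_1(M_i)]$ produces the desired linear lower bound.

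The step I anticipate being the main obstacle is the min-max one: one must ensure that the minimal surface produced by Pitts--Rubinstein still has genus at most $g$ (and has not, for example, been doubled via orientation-covering) and that the area/balancing conditions are simultaneously controlled. The cleanest alternative, should that approach get stuck, is probably to build the sweep-out explicitly from a short pants decomposition of $\Sigma_0$ by curves of bounded length, so that the maximum area along the sweep-out is already bounded by a universal multiple of $g$ and no min-max refinement is needed to engage the Cheeger inequality.
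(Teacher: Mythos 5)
Theorem 1.1 is not proved in this paper; it is quoted directly from Lackenby \cite{lac}, and the text following it only records the strategy: Lackenby shows that Property $\tau$ with respect to a family of covers forces positive infimal Heegaard gradient, and Lubotzky \cite{lu} supplies Property $\tau$ for the congruence covers of an arithmetic hyperbolic $3$-manifold. Your sketch reconstructs exactly that route, so at the level of approach it matches the proof the paper is citing. The upper bound from lifting a fixed triangulation is the standard argument. For the lower bound, your ingredients --- Property $\tau$ for congruence quotients, Buser's inequality (correctly, not Cheeger's) to convert the spectral gap $\lambda_1 \geq \lambda_0$ into a uniform Cheeger bound $h(M_i) \geq h_0$, a sweep-out bisection step giving a leaf of area at least $\tfrac{1}{2}h_0\,\mathrm{vol}(M_i)$ in every sweep-out compatible with a genus-$g$ Heegaard surface, and Simon--Smith/Pitts--Rubinstein min-max together with the Gauss-equation estimate $\mathrm{area}(\Sigma) \leq -2\pi\chi(\Sigma)$ for a minimal surface in a hyperbolic $3$-manifold --- are precisely the ingredients Lackenby assembles. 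The obstacle you flag, genus control of the min-max limit (compressions, one-sided components, multiplicities), is the genuinely delicate step, and in Lackenby's argument it is handled by first untelescoping to a generalized Heegaard splitting with strongly irreducible thick levels so that the Simon--Smith genus bounds apply cleanly and the Cheeger/bisection step can be run against each piece. With that reduction supplied, your outline is a correct account of the cited proof; your fallback of a ``pants-decomposition sweep-out with uniformly bounded area'' is not needed and would in any case require its own justification, since bounded curve length does not by itself bound the areas of the interpolating surfaces.
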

He established this theorem by proving that Property $\tau$ with respect to a set of finite covers $\{M_i\}$ implies that $\{M_i\}$ has positive infimal Heegaard gradient. Since Luboltzky showed that an arithmetic hyperbolic 3-manifold has Property $\tau$ with respect to its congruence covers \cite{lu}, Theorem 1.1 follows.

As we've seen Heegaard genera and degrees of towers of covers provide important information and have strong connections with various things like Property $\tau$, but, unfortunately, little has been known about these in general \cite{lac}\cite{lo}. Here, we construct towers of finite covers of hyperbolic 3-manifolds with increasing Heegaard genera. While we do not show that the infimal Heegaard gradient is positive, we do give quantitative lower bounds for the Heegaard genus in terms of the degree of the cover. More precisely, we prove the following statement.

\begin{theorem}
Let $M$ be a closed hyperbolic 3-manifold and $\epsilon>0$ is an any (small) number. Then there exists a tower of finite congruence covers
\begin{equation*}
\dots \rightarrow M_i\rightarrow \dots \rightarrow M_2 \rightarrow M
\end{equation*}
with the Heegaard genus of $M_i$ $\geq [\pi_1(M):\pi_1(M_i)]^{\tfrac{1}{8}-\epsilon}$. If $M$ is arithmetic, then we can improve the exponent $\dfrac{1}{8}-\epsilon$ to $\dfrac{1}{4}-\epsilon$.
\end{theorem}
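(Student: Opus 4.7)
My plan is to combine a geometric lower bound on Heegaard genus in terms of embedded-ball size with an arithmetic injectivity-radius bound for principal congruence covers. The first ingredient is a Bachman--Cooper--White-style estimate: if a closed hyperbolic $3$-manifold $N$ contains an embedded ball of radius $r$, then $g(N) \geq \tfrac{1}{2}(\cosh r + 1)$, equivalently $g(N) \gtrsim \sqrt{\mathrm{vol}(B_r)}$. Consequently it suffices to produce a nested tower $\{M_i\}$ of congruence covers whose injectivity radius at \emph{every} point grows like a positive fraction of $\log[\pi_1(M):\pi_1(M_i)]$, the constant determining the final exponent.

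Let $k$ be the invariant trace field of $\Gamma=\pi_1(M)$ and $R$ an $S$-integer ring in $k$ containing the invariant trace ring, chosen so that (after passing to $\Gamma^{(2)} = \langle \gamma^2 : \gamma \in \Gamma\rangle$) we may assume $\Gamma^{(2)} \subset \mathrm{PSL}_2(R)$. For prime ideals $\mathfrak{p}_1,\mathfrak{p}_2,\ldots$ of $R$ of growing norm, set
\[
 \Gamma_i \;:=\; \Gamma^{(2)} \cap \ker\bigl(\mathrm{PSL}_2(R) \to \mathrm{PSL}_2(R/\mathfrak{p}_1\cdots\mathfrak{p}_i)\bigr),
\]
a nested tower of congruence subgroups with $[\Gamma^{(2)}:\Gamma_i]$ comparable to $\prod_j N(\mathfrak{p}_j)^3$ by strong approximation. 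For any non-identity $\gamma \in \Gamma_i$, writing $\gamma = I+A$, the determinant relation $\det\gamma = 1$ forces $\mathrm{tr}(A) = -\det(A)$, so $t := \mathrm{tr}(\gamma) - 2 = \mathrm{tr}(A)$ lies in $\mathfrak{p}_1^{2}\cdots\mathfrak{p}_i^{2}$. The product formula in $k$ then bounds the archimedean absolute value of $t$ below by a positive power of $\prod_j N(\mathfrak{p}_j)$. Combined with the identity $2\cosh(\ell(\gamma)/2) = |\mathrm{tr}(\gamma)|$ and with $d(\tilde{x},\gamma\tilde{x}) \geq \ell(\gamma)$ for every $\tilde{x}$, this gives a uniform lower bound on the injectivity radius at every point of $M_i$.

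The quality of the trace estimate --- and hence the exponent --- hinges on what is known about the non-identity archimedean embeddings of $k$. In the \emph{arithmetic} case, $\Gamma$ comes from a quaternion algebra ramified at every infinite place but one, so the other embeddings send $\Gamma$ into a compact subgroup of $\mathrm{PSL}_2(\mathbb{C})$ and $|\sigma(\mathrm{tr}\gamma)|\leq 2$ there; plugging these trivial bounds into the product formula produces the sharp lower bound on $|t|$, hence on the injectivity radius, giving the exponent $\tfrac{1}{4}-\epsilon$. In the general case there is no such a priori control, and one loses a factor proportional to the number of archimedean places of $k$, recovering only $\tfrac{1}{8}-\epsilon$. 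The main obstacle is precisely this general setting: without compactness at the extra embeddings nothing prevents an element $\gamma \in \Gamma_i$ from concentrating its trace mass away from the principal embedding, producing a short geodesic there; recovering even the weaker bound will likely require a Minkowski/pigeonhole argument on the lattice $\Gamma_i - I$ inside $\prod_v M_2(k_v)$ to force the shortest loops to contribute at the principal embedding at a rate compatible with the claimed exponent.
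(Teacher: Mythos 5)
Your arithmetic branch is a correct alternative to the paper's argument, and it is genuinely different. In that setting the invariant trace field $k$ has exactly one complex place and the quaternion algebra is ramified at every real place, so traces of $\Gamma^{(2)}$ are algebraic integers with $|\sigma(\text{tr}\,\gamma)| \leq 2$ at every real embedding $\sigma$; combining $\text{tr}\,\hat\gamma \mp 2 \in \mathfrak{p}^2$ with $|N_{k/\mathbb{Q}}(t)| = |\sigma_0(t)|^2 \prod_{\text{real }\sigma} |\sigma(t)|$ forces $|\sigma_0(t)| \gtrsim \text{N}(\mathfrak{p})$, hence systole $\gtrsim 2\log\text{N}(\mathfrak{p})$ and genus $\gtrsim \text{N}(\mathfrak{p}) \approx \deg^{1/3}$. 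You actually undersell this: with principal congruence subgroups of degree $\sim \text{N}(\mathfrak{p})^3$ the exponent is $1/3$, not $1/4-\epsilon$. (Also note the Bachman--Cooper--White bound is $\tfrac12\cosh r$, not $\tfrac12(\cosh r+1)$.) The paper instead proves the arithmetic case with the same adaptive prime-avoidance machinery as the general case, upgraded only by Marklof's $O(e^{\ell})$ count of distinct complex lengths, and it works with Hecke $\Gamma_1$-subgroups rather than principal ones, arriving at $1/4-\epsilon$.

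The general branch, however, has a real gap that your last paragraph only partially acknowledges. The norm of $\text{tr}\,\hat\gamma \mp 2$ controls the \emph{product} of all archimedean absolute values; absent an upper bound at the auxiliary embeddings --- exactly what ramification supplies in the arithmetic case --- the distinguished $|\sigma_0(t)|$ can be arbitrarily small for individual $\gamma$, so the product formula yields no lower bound on the systole of a fixed $\Gamma_i$. Your claim that one merely ``loses a factor proportional to the number of archimedean places'' and still gets $1/8-\epsilon$ is unsupported, and the Minkowski/pigeonhole idea is not the right tool: Minkowski gives an \emph{upper} bound on the shortest lattice vector, whereas here you need a \emph{lower} bound on the shortest geodesic of a specific cover. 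The paper closes the gap by refusing to fix the primes in advance. It enumerates the hyperbolic elements of length $\leq \ell$ (at most $c'e^{2\ell}$ of them, by the Prime Geodesic Theorem and Corollary 2.3), uses a Milnor quasi-isometry/word-length estimate (Lemma 3.1) to bound \emph{all} Galois conjugates of each trace by $C^{\ell}$ and hence each $|\text{N}(\alpha_i \pm 2\beta_i)|$ by $C_3^{\ell}$, so that the product of these norms is $\leq C_3^{O(\ell e^{2\ell})}$, and then invokes Chebotarev together with a $\theta$-function estimate (Corollary 2.5) to find a split rational prime $p \ll \ell e^{2\ell}$ not dividing that product. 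Choosing $p$ \emph{after} $\ell$ in this way is what makes the non-arithmetic case go through without any a priori control on the auxiliary embeddings, and it is precisely the ingredient your proposal is missing.
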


For the arithmetic non-compact case, we get a similar result to the above arithmetic closed case.

\begin{theorem}
For a given arithmetic non-compact hyperbolic 3-manifold $M$ and any $\epsilon>0$, there exists a tower of finite congruence covers
\begin{equation*}
\dots \rightarrow M_i\rightarrow \dots \rightarrow M_2 \rightarrow M
\end{equation*}
such that the Heegaard genus of $M_i$ $\geq [\pi_1(M):\pi_1(M_i)]^{\tfrac{1}{4}-\epsilon}$.
\end{theorem}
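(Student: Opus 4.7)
The plan is to adapt the arithmetic case of Theorem 1.2 to the non-compact setting, the new feature being the need to control cusps in addition to the loxodromic spectrum. The three ingredients are a principal congruence tower inside a Bianchi-type group, an arithmetic lower bound on injectivity radii in the tower, and a Culler-Shalen-White-type inequality converting large embedded balls into lower bounds on rank, hence on Heegaard genus.

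By arithmeticity and non-compactness, $\Gamma = \pi_1(M)$ is commensurable with a Bianchi group $\mathrm{PSL}(2, O_K)$ for $O_K$ the ring of integers of an imaginary quadratic field. After passing to a finite-index torsion-free subcover (which only affects the constants), I may assume $\Gamma \subset \mathrm{PSL}(2, O_K)$. For a sequence of prime ideals $I \subset O_K$ with $q = N(I) \to \infty$, set $\Gamma(I) = \ker(\Gamma \to \mathrm{PSL}(2, O_K/I))$ and $M_I = \Gamma(I) \backslash \mathbb{H}^3$, so that $[\Gamma : \Gamma(I)] \lesssim q^3$.

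The key arithmetic step is a uniform lower bound on the matrix entries of $g - \mathrm{Id}$ for non-trivial $g \in \Gamma(I)$: such entries lie in $I$, and any nonzero $\alpha \in I$ satisfies $|\alpha|^2 = N(\alpha) \geq q$, so $|\alpha| \geq q^{1/2}$. Applied to the trace of a loxodromic $g$ this gives $|\mathrm{tr}(g) - 2| \geq q^{1/2}$ and hence a translation length $\ell(g) \gtrsim \log q$; applied to a parabolic $g$ in a peripheral subgroup, it bounds the corresponding Euclidean translation vector below by $q^{1/2}$. For a base point $x \in M$ chosen in the thick part, a suitable lift of $x$ in $M_I$ then has injectivity radius bounded below by a constant multiple of $\log q$, so $M_I$ contains an embedded ball of this radius. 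A Culler-Shalen-White-type inequality, in a form valid for finite-volume cusped hyperbolic 3-manifolds, converts this into a lower bound on the rank of $\pi_1(M_I)$ that is a positive power of $q$. Rewriting in terms of $[\Gamma : \Gamma(I)] \sim q^3$ and absorbing constants into the $\epsilon$ delivers the target bound $[\Gamma : \Gamma(I)]^{1/4 - \epsilon}$ on the Heegaard genus of $M_I$.

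The main difficulty I anticipate is the treatment of cusps. In the closed arithmetic case of Theorem 1.2, all non-identity elements of $\Gamma$ are loxodromic and the injectivity-radius estimate holds at any point of $M_I$, so White's theorem applies directly. In the present setting, parabolic isometries produce arbitrarily short loops deep in each cusp, and the injectivity-radius bound can only be arranged at points kept in the thick part of $M_I$; one must therefore (i) verify that a lift of a thick point of $M$ remains in the thick part of $M_I$ as $q \to \infty$ (which requires understanding how the geometry of cusp neighborhoods scales with $q$) and (ii) invoke a version of the Culler-Shalen-White inequality valid in the cusped rather than the closed case, whose proof requires additional bookkeeping because parabolic generators interact nontrivially with the free-product decompositions underlying the usual argument.
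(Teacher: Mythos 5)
There is a genuine gap, and it is in the arithmetic accounting: your choice of congruence subgroup cannot deliver the exponent $\tfrac{1}{4}-\epsilon$.

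You work with the \emph{principal} congruence subgroup $\Gamma(I)$, for which $[\Gamma:\Gamma(I)]\asymp N(I)^3$. For a nontrivial $\gamma\in\Gamma(I)$ every entry of $\gamma-\mathrm{Id}$ lies in $I$, so the off-diagonal entries satisfy $|c|,|b|\gtrsim N(I)^{1/2}$. Feeding both bounds into the geometric estimate (the paper's Lemma 5.1, with $C_1\asymp C_2\asymp N(I)^{1/2}$ and the base point chosen at height $t=(C_2/C_1)^{1/2}\asymp 1$) gives $\cosh d_{\mathbb{H}^3}(\gamma\zeta,\zeta)\gtrsim N(I)$, hence an embedded ball of radius $r$ with $\cosh r\gtrsim N(I)^{1/2}$ and a Heegaard genus lower bound on the order of $N(I)^{1/2}$. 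Against a degree of order $N(I)^3$, this is only $[\Gamma:\Gamma(I)]^{1/6-\epsilon}$, not $[\Gamma:\Gamma(I)]^{1/4-\epsilon}$. (This $1/6$ figure is exactly what the paper's Theorem 1.5(iii) reflects.) The paper instead uses the \emph{Hecke-type} subgroup $\Gamma_0(I)$: there the parabolic control drops to $|b|\geq 1$, so the ball radius satisfies only $\cosh r\gtrsim N(I)^{1/4}$, but the degree is just $\prod(N(P_i)+1)\asymp N(I)$, and the tradeoff works out to exactly $\tfrac{1}{4}-\epsilon$. Choosing the index as small as possible is the essential move, not the injectivity-radius bound per se, and your proposal misses it.

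Two secondary points. First, you repeatedly invoke a ``Culler--Shalen--White-type inequality''; the result you need is the Bachman--Cooper--White theorem (Theorem 2.1 in the paper), whose extension to the finite-volume cusped case you correctly flag as requiring justification. The paper supplies this via a short Dehn-filling argument (Corollary 2.2): the cusped manifold is a geometric limit of its Dehn fillings, Dehn filling never increases Heegaard genus, and the upper injectivity radii of the fillings converge to that of the cusped manifold. Second, you assert without argument that a lift of a thick base point ``has injectivity radius bounded below by a constant multiple of $\log q$.'' The precise statement and the balancing of the two displacement regimes (loxodromic/non-peripheral versus parabolic) is the content of Lemma 5.1, with the specific height $t=(C_2/C_1)^{1/2}$; this computation is where the exponents are actually determined, and sketching it as a consequence of scaling of cusp neighborhoods obscures the key point that the optimal base point depends on the ratio $C_2/C_1$, which differs between $\Gamma(I)$ and $\Gamma_0(I)$.

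Finally, the theorem asserts the existence of a nested \emph{tower} $\cdots\to M_i\to\cdots\to M$, not merely an infinite family of covers with the stated genus bound; the paper's proof builds the tower by taking $\Gamma_0(P_1\cdots P_i)$ for a suitably chosen increasing sequence of prime ideals, checking that adding one more prime at each stage preserves the inequality. Your write-up only constructs a sequence indexed by single primes $I$; you would need to make this nesting explicit as well.
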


Although these results are weaker than Theorem 1.1 in the arithmetic case, our proofs involve different methods. In particular they use the result of Bachman, Cooper and White about the relation between the injectivity radius and the Heegaard genus of a hyperbolic 3-manifold (see Theorem 2.1 and Corollary 2.2). Later in Section 8, we will analyze the limitations of these methods. It will turn out that methods qualitatively similar to our own cannot prove analogues of Theorems 1.2 and 1.3 with $\dfrac{1}{8}-\epsilon$ and $\dfrac{1}{4}-\epsilon$ replaced by $x$ for any $x> \dfrac{1}{2}$. The proofs of Theorem 1.2 and Theorem 1.3 are similar in spirit but different in the details so we give them separately. In addition, we prove the following theorems using the tools in the proofs of Theorems 1.2 and 1.3; see Section 2 for the definition of the lower injectivity radius, the principal congruence subgroup and the Hecke-type congruences subgroups.

\begin{theorem}
For a given closed hyperbolic 3-manifold $M$ and for any $\epsilon>0$, there exists infinitely many congruence covers $\{M_i\}$ such that
\begin{equation*}
e^{r_i}> [\pi(M):\pi(M_i)]^{\tfrac{1}{8}-\epsilon}
\end{equation*}
holds for all $i$ where $r_i$ is the lower injectivity radius of $M_i$. In addition, for any $x \in M_i$, $M_i$ contains an embedded ball $B_x$ with center $x$ so that
\begin{equation*}
\emph{vol}(B_x) > \left(\emph{vol}(M_i)\right)^{\tfrac{1}{4}-\epsilon}
\end{equation*}
holds. If $M$ is arithmetic, then we can improve the exponents $\dfrac{1}{8}-\epsilon$ and $\dfrac{1}{4}-\epsilon$ to $\dfrac{1}{4}-\epsilon$ and $\dfrac{1}{2}-\epsilon$ respectively.
\end{theorem}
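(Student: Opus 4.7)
\emph{Proof proposal for Theorem 1.4.} The tower of covers $\{M_i\}$ is the same one constructed in the proofs of Theorems 1.2 and 1.3; indeed the injectivity-radius estimate $e^{r_i} > [\pi_1(M):\pi_1(M_i)]^{1/8-\epsilon}$ (respectively $1/4-\epsilon$ in the arithmetic case) is the geometric input that, via Theorem 2.1 and Corollary 2.2 of Bachman--Cooper--White, is fed into those proofs to bound the Heegaard genus. My plan is therefore to isolate that estimate and then deduce the embedded-ball statement from it by elementary hyperbolic geometry.

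To obtain the injectivity-radius bound I would let $k$ denote the (invariant) trace field of $\Gamma = \pi_1(M)$ with ring of integers $\mathcal{O}_k$, and form principal (or Hecke-type) congruence covers $M_i \to M$ corresponding to a tower of ideals $I_i \subset \mathcal{O}_k$ with $N(I_i) \to \infty$. For a non-trivial $\gamma \in \Gamma(I_i)$ the relation $\det \gamma = 1$, combined with the fact that the entries of $\gamma - I$ lie in $I_i$, forces $\mathrm{tr}(\gamma) - 2$ to be a nonzero algebraic integer in $I_i^{2}$. The product formula then gives $|\mathrm{tr}(\gamma) - 2| \geq c\, N(I_i)^{2/d}$, where $d = [k:\mathbb{Q}]$ and where the archimedean conjugates of $\gamma$ are controlled by a uniform upper bound on translation length. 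Combined with the asymptotic $|\mathrm{tr}(\gamma) - 2| \sim e^{\ell(\gamma)/2}$ for loxodromic $\gamma$ and the standard index estimate $[\Gamma:\Gamma(I_i)] \lesssim N(I_i)^{\beta}$ (with $\beta$ smaller in the arithmetic setting), this yields the claimed lower bound on $r_i = \tfrac{1}{2}\,\mathrm{sys}(M_i)$.

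With the injectivity-radius bound in hand, the embedded-ball claim is immediate. Since $M_i$ is closed, every point $x \in M_i$ lies in an embedded hyperbolic ball of radius $r_i$, whose volume in $\mathbb{H}^{3}$ equals $\pi\bigl(\sinh(2r_i) - 2r_i\bigr) \geq c_0\, e^{2r_i}$ for $r_i$ large. Using $\mathrm{vol}(M_i) = [\pi_1(M):\pi_1(M_i)]\cdot \mathrm{vol}(M)$ and squaring the bound on $e^{r_i}$ gives $\mathrm{vol}(B_x) > \mathrm{vol}(M_i)^{1/4 - \epsilon}$ in the general case and $\mathrm{vol}(M_i)^{1/2 - \epsilon}$ in the arithmetic case, after a harmless adjustment of $\epsilon$.

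The main obstacle, and the reason the exponent improves in the arithmetic case, is precisely the interplay between the translation-length lower bound for non-identity elements of $\Gamma(I_i)$ and the index $[\Gamma:\Gamma(I_i)]$. In the arithmetic setting the trace field is imaginary quadratic ($d = 2$) and the index is cleanly $\sim N(I_i)^{3}$, so the two ingredients combine efficiently. In general the invariant trace field has higher degree, more archimedean conjugates enter the product formula, and one is forced to work inside a subring of $\mathcal{O}_k$ generated by traces; balancing these trade-offs is the delicate part of the argument, and is exactly what will have been carried out in the proofs of Theorems 1.2 and 1.3.
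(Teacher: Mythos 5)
Your high-level plan --- reuse the congruence covers built in the proof of Theorem 1.2, isolate the systole lower bound obtained there, and then turn that bound into an embedded-ball volume estimate via $\mathrm{vol}(B(r)) = \pi(\sinh 2r - 2r) \gtrsim e^{2r}$ and $\mathrm{vol}(M_i)=[\Gamma:\Gamma_i]\,\mathrm{vol}(M)$ --- is exactly what the paper does (the proof at the end of Section 3 simply repackages the inequalities used in Lemma 3.2 as (3.6) and (3.7) and then invokes the ball-volume formula). The last step, and the observation that in a closed manifold every point is the center of an embedded ball of radius $\underline{\mathrm{inj}}$, is correct.

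However, the mechanism you sketch for \emph{producing} the systole lower bound is not the paper's argument and has a genuine gap in the general (non-arithmetic) case. You propose: $\gamma\in\Gamma(I)$ forces $\mathrm{tr}(\gamma)-2\in I^2$, and then the product formula gives $|\mathrm{tr}(\gamma)-2|\geq c\,N(I)^{2/d}$ because ``the archimedean conjugates of $\gamma$ are controlled by a uniform upper bound on translation length.'' That control simply does not exist for a general closed hyperbolic 3-manifold: the Galois-conjugate representations $\sigma\circ\rho$ at the other archimedean places need not be discrete, and $|\sigma(\mathrm{tr}(\gamma))|$ grows exponentially in the word length of $\gamma$ (this is precisely the content of Claim 4.1 and Lemma 3.1, where the bound is $\lesssim (2C_1)^{c'\ell}$, i.e.\ depends on $\ell(\gamma)$ and on constants $C_1,c'$ intrinsic to $\Gamma$). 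If you push the product-formula argument through with that exponential bound, the exponent you extract depends on the $\Gamma$-dependent constant $C_3$ rather than being the universal $1/8-\epsilon$; the paper makes exactly this point in remark (1) of Section~8. What the paper actually does is different in kind: it bounds $\prod_i |\mathrm{N}(\alpha_i\pm 2\beta_i)|$ \emph{above} over \emph{all} geodesics of length $\leq \ell$ (there are $\lesssim e^{2\ell}$ of them by the Prime Geodesic Theorem, or $\lesssim e^{\ell}$ distinct traces in the arithmetic case by Marklof), and then uses Chebotarev (Corollary 2.5) to find a completely split rational prime $p\lesssim \log(\text{product})\sim \ell e^{2\ell}$ that divides none of the factors. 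The Hecke subgroup $\Gamma_1(P_S)$ for a factor $P_S\mid p$ then excludes every short geodesic, while its index is $\lesssim p^2\sim \ell^2 e^{4\ell}$, independent of $C_3$ up to the $\epsilon$. This counting/pigeonhole structure, not the product formula, is what makes the exponent $\Gamma$-independent; it is also why the prime ideals in the tower cannot be ``any tower with $N(I_i)\to\infty$'' but must be chosen to avoid the specific norms produced by Lemma 3.1.

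A smaller point: you cite Theorem 1.3's covers alongside Theorem 1.2's, but Theorem 1.3 treats the non-compact case via the upper injectivity radius (Section 5--6) and does not enter into Theorem 1.4 at all; Theorem 1.4 is proved entirely out of the machinery of Section 3.
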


\begin{theorem}
 For a given arithmetic non-compact hyperbolic 3-manifold $M$, let $M'$ be a finite cover of $M$ such that its fundamental group $\Gamma'$ is a subgroup of a Bianchi group $PSL_2(\mathcal{O}_n)$ and let $I$ be a square-free ideal of $\mathcal{O}_n$ with no prime factors from a fixed finite set of prime ideals which depends only on $\Gamma'$. Then the following statements hold.\\
\\
\emph{(i)} For any $\epsilon>0$, there exists $d>0$ depending on $\epsilon$ and $\Gamma'$ such that if $M'_0(I)$ is a cover induced by a Hecke-type congruence subgroup $\Gamma'_0(I)$ with $[\Gamma':\Gamma'_0(I)]>d$, then $M'_0(I)$ contains an embedded ball $\text{B}$ which satisfies
\begin{equation*}
\emph{vol}(B) > \left(\emph{vol}(M'_0(I))\right)^{1/2-\epsilon}.
\end{equation*}
\emph{(ii)} There exists $d>0$ depending only on $\Gamma'$ such that if $M'_1(I)$ is a cover induced by a Hecke-type congruence subgroup $\Gamma'_1(I)$ with $[\Gamma':\Gamma'_1(I)]>d$, then $M'_1(I)$ contains an embedded ball $\text{B}$ which satisfies
\begin{equation*}
\emph{vol}(B) > c \left(\emph{vol}(M'_1(I))\right)^{1/4}.
\end{equation*}
\emph{(iii)} There exists $d>0$ depending only on $\Gamma'$ such that if $M'(I)$ is a cover induced by a principal congruence subgroup $\Gamma'(I)$ with $[\Gamma':\Gamma'(I)]>d$, then $M'(I)$ contains an embedded ball $\text{B}$ which satisfies
\begin{equation*}
\emph{vol}(B) > c \left(\emph{vol}(M'(I))\right)^{1/3}.
\end{equation*}
\end{theorem}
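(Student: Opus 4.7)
The plan is to work in the upper half-space model of $\mathbb{H}^3$, conjugate so that $\infty$ is a cusp of $M'$, and in each of the three covers produce an embedded geodesic ball inside the maximal embedded horoball at $\infty$. The two essential inputs are the minimum modulus of a lower-left matrix entry appearing in the congruence group and the shortest translation in the cusp cross-section.

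First I would pin down the height $t_0$ at which the horoball at $\infty$ is maximal. Any element $g \in PSL_2(\mathcal{O}_n)$ with lower-left entry $c \neq 0$ sends the horoball $\{t \geq s\}$ at $\infty$ to a horoball tangent to $\mathbb{C}$ of Euclidean diameter $1/(|c|^2 s)$; hence the horoball at height $s$ is embedded iff $s \geq 1/c_{\min}$, where $c_{\min}$ is the smallest nonzero $|c|$ that appears. In each of $\Gamma'(I)$, $\Gamma'_0(I)$, $\Gamma'_1(I)$ the entry $c$ must lie in $I$, and every nonzero $\alpha \in I$ has norm divisible by $N(I)$, so $c_{\min} \geq \sqrt{N(I)}$ and an embedded horoball reaches down to $t_0 \leq 1/\sqrt{N(I)}$.

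Next I would determine the cusp stabilizer at $\infty$. Since upper-triangular matrices in $\Gamma'$ already satisfy $c=0$, the condition $c \equiv 0 \pmod{I}$ is vacuous on them, so $\Gamma'_0(I)_\infty = \Gamma'_1(I)_\infty = \Gamma'_\infty$ is a finite-index sublattice of $\mathcal{O}_n$ (shortest vector $\asymp 1$), whereas for $\Gamma'(I)$ the upper-right entry must also lie in $I$, forcing the translation lattice to be $I$ (shortest vector $\asymp \sqrt{N(I)}$). A direct computation in $\mathbb{H}^3$ then shows that a ball of radius $r$ centered at $(z_0, t_0 e^r)$ embeds in the horoball provided $e^{2r} \leq 1 + \ell/t_0$, where $\ell$ is the shortest cusp translation; since the volume of such a ball is $\asymp e^{2r}$, one obtains an embedded ball of volume $\asymp \ell/t_0$, which is $\asymp \sqrt{N(I)}$ in cases (i) and (ii) and $\asymp N(I)$ in case (iii). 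Combined with the standard index formulas $[\Gamma':\Gamma'_0(I)] \leq \prod_{P\mid I}(N(P)+1)$, $[\Gamma':\Gamma'_1(I)] \asymp N(I)^2$, $[\Gamma':\Gamma'(I)] \asymp N(I)^3$ together with $\mathrm{vol}(M'_*(I)) = [\Gamma':\Gamma'_*(I)]\,\mathrm{vol}(M')$, these translate into the three inequalities.

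The main obstacle I anticipate is the interplay between $\Gamma'$ and the ambient Bianchi group: for the index formulas to genuinely match the stated products one needs the reduction $\Gamma' \to PSL_2(\mathcal{O}_n/I)$ to be surjective, which by strong approximation holds for all $I$ coprime to a finite set of primes depending only on $\Gamma'$; this is exactly the role of the excluded bad primes. A secondary nuisance is the factor $\prod_{P\mid I}(1 + 1/N(P))$ in the index of $\Gamma'_0(I)$, which over square-free ideals with many small prime factors can inflate by $N(I)^\epsilon$; this is the source of the $\epsilon$-loss in part (i) and can be controlled by demanding the bad-prime set contain all primes of norm below a threshold depending on $\epsilon$. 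In parts (ii) and (iii) the analogous factors are bounded by absolute constants and the conclusions hold with uniform constants $c$.
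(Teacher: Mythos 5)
Your argument is correct and, once unpacked, coincides with the paper's proof, which goes through Lemma 5.1; your horoball framing is essentially a geometric re-derivation of that lemma. Lemma 5.1 takes the base point $\zeta=(C_2/C_1)^{1/2}\mathrm{j}$ on the vertical axis (with $C_1$ the minimum nonzero $|c|$ and $C_2$ the minimum parabolic $|b|$ in the subgroup) and verifies $\cosh d_{\mathbb{H}^3}(\gamma\zeta,\zeta)\geq C_1C_2/2$ for all nontrivial $\gamma$ by plugging into the explicit distance formula and splitting into the cases $c\neq 0$ and $c=0$. Those two cases are precisely your two steps: the ball sits inside the embedded horoball of depth $t_0=1/C_1=1/c_{\min}$ (which handles $c\neq 0$), and it avoids its cusp translates of length $\geq\ell=C_2$ (which handles $c=0$); your center height $t_0e^r\approx\sqrt{t_0\ell}$ is exactly the paper's $(C_2/C_1)^{1/2}$. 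Both give $e^{2r}\asymp C_1C_2$ and so an embedded ball of volume $\asymp N(I)^{1/2}$ for $\Gamma'_0(I),\Gamma'_1(I)$ and $\asymp N(I)$ for $\Gamma'(I)$; the conclusion then follows from the index formulas (2.3), (2.9), (2.10) exactly as in Section~7. The one point to correct is your proposed handling of part (i): you suggest enlarging the excluded prime set as $\epsilon\to 0$ to tame $\prod_{P\mid I}(1+1/N(P))$, but the theorem fixes that finite set independently of $\epsilon$ (only the threshold $d$ may depend on $\epsilon$), so that fix would prove a weaker statement. It is also unnecessary: by Mertens' theorem $\prod_{P\mid I}(1+1/N(P)) = O(\log\log N(I))$, so it is absorbed by the slack $N(I)^{\epsilon}$ once the index is sufficiently large, which is the content of the paper's Section~7(i) estimate.
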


In fact, Theorem 1.5 (iii) is shown in \cite{yeung} with the better exponent $2/3$ in a different way.

Here is the outline of the paper. First, in Section 2, we review some basic facts which we use in the proofs. We prove Theorems 1.2 and 1.4 in Section 3-4 and 1.3 in Section 5-6. In Section 7, we shall show Theorem 1.4. Finally, we briefly analyze why our method falls short of proving Theorem 1.1 in Section 8.

\section{Some Background}

\subsection{Congruence Subgroups}
Let $\Gamma$ be the fundamental group of a finite-volume hyperbolic 3-manifold $M$ as a subgroup of $PSL_2(\mathbb{C})(\cong SL_2(\mathbb{C})/\{\pm I\})$. Then, after conjugating, we can assume that there exists an embedding
\begin{equation}\label{2.1}
\rho: \Gamma \hookrightarrow PSL_2(\mathcal{O}_S)
\end{equation}
where $\mathcal{O}_S$ is the $S$-$integers$ of an algebraic number field $K$ (see Theorem 3.2.8 in \cite{mr} taking $S$ to be the multiplicative set of the denominators of the generators of $\Gamma$). Given an ideal $J_S$ in $\mathcal{O}_S$, the \emph{principal congruence subgroup of level} $J_S$  of the group $\Gamma$ is the kernel of the natural reduction
\begin{equation}\label{2.1}
\rho_{J_S}: \Gamma \rightarrow PSL_2(\mathcal{O}_S/J_S)
\end{equation}
 and is denoted by $\Gamma(J_S)$. If $J_S=P_1...P_r$ is a square free ideal of $\mathcal{O}_S$ (so the $P_i$ are distinct prime ideals of $\mathcal{O}_S$), then by the Chinese Remainder Theorem, we have
\begin{equation*}
SL_2(\mathcal{O}_S/J_S)=SL_2(\mathcal{O}_S/P_1)\times...\times SL_2(\mathcal{O}_S/P_r).
\end{equation*}
Since
\begin{equation*}
|SL_2(\mathcal{O}_S/P_i)|=\text{N}{(P_i)}(\text{N}^2{(P_i)}-1)
\end{equation*}
for each prime ideal $P_i$ where $\text{N}{(P_i)}$ is the norm of an ideal $P_i$ in $\mathcal{O}_S$, we get
\begin{equation}\label{2.1}
|PSL_2(\mathcal{O}_S/J_S)|=\dfrac{1}{2}\prod^r_{i=1}\text{N}{(P_i)}(\text{N}^2{(P_i)}-1).
\end{equation}
Clearly the degree $[\Gamma:\Gamma(J_S)]$ is also bounded by the above number.

More generally, a \emph{congruence subgroup} of $\Gamma$ is a subgroup of $\Gamma$ which contains a principal congruence subgroup. Typical examples are the \emph{Hecke-type congruence subgroups} $\Gamma_0(J_S)$ and $\Gamma_1(J_S)$ which are defined to be

\begin{equation*}
\Gamma_0(J_S)=\left\{\gamma \in \Gamma~\Big|~\rho_{J_S}(\gamma)=
\left( {\begin{array}{cc}
 * & *  \\
 0 & *  \\
\end{array} } \right)\right\},
\end{equation*}
\begin{equation*}
\Gamma_1(J_S)=\left\{\gamma \in \Gamma~\Big|~\rho_{J_S}(\gamma)=
\left( {\begin{array}{cc}
 1 & *  \\
 0 & 1  \\
\end{array} } \right)\right\}
\end{equation*}
where $\left( {\begin{array}{cc}
 * & *  \\
 0 & *  \\
\end{array} } \right)$ and $\left( {\begin{array}{cc}
 1 & *  \\
 0 & 1  \\
\end{array} } \right)$ are the matrix representations of the elements in $PSL_2(\mathcal{O}_S/J_S)$. These groups can also be expressed as follow in more explicit forms;
\begin{equation*}
\Gamma_0(J_S)=\left\{\left. \left( {\begin{array}{cc}
 a & b  \\
 c & d  \\
\end{array} } \right) \in \hat{\Gamma}~\right|~\left( {\begin{array}{cc}
 a & b  \\
 c & d  \\
\end{array} } \right) \equiv
\left( {\begin{array}{cc}
 * & *  \\
 0 & *  \\
\end{array} } \right) \mod J_S   \right\}/\{\pm I\},
\end{equation*}
\begin{equation*}
\Gamma_1(J_S)=\left\{\left. \left( {\begin{array}{cc}
 a & b  \\
 c & d  \\
\end{array} } \right) \in \hat{\Gamma}~\right|~\left( {\begin{array}{cc}
 a & b  \\
 c & d  \\
\end{array} } \right) \equiv \pm
\left( {\begin{array}{cc}
 1 & *  \\
 0 & 1  \\
\end{array} } \right) \mod I_S   \right\}/\{\pm I\}
\end{equation*}
where $\hat{\Gamma}$ is the inverse image of $\Gamma$ in $SL_2(O_S)$.

Now we look at two simple cases where the map in (2.2) is surjective. First, for a prime ideal $P$ of $\mathcal{O}_S$, extend the map in (2.1) to
\begin{equation*}
\Gamma \hookrightarrow PSL_2(K_P)
\end{equation*}
where $K_P$ is the $P$-adic local field. Then this restricts to a map
\begin{equation}\label{2.5}
\Gamma \hookrightarrow PSL_2(\mathcal{O}_P)
\end{equation}
where $\mathcal{O}_P$ is the unique $p$-adic integers of $K_P$. If we consider the reduction map
\begin{equation}
\Gamma \rightarrow PSL_2(\mathcal{O}_P/\pi\mathcal{O}_P)
\end{equation}
of (2.4) where $\pi\mathcal{O}_P$ is the unique maximal ideal of $\mathcal{O}_P$, then it is clear that the map in (2.5) is actually the same as the one in (2.2) when $J_S=P$. According to \cite{lr}, this map in (2.5) is  surjective for almost all prime ideals $P$ such that $P$ is a prime ideal factor of a rational prime that splits completely in $\mathcal{O}_K$. A second case where (2.2) is surjective comes when $\Gamma$ is a subgroup of a Bianchi group; that is, $\Gamma \subset PSL_2(\mathcal{O}_K)$ where $\mathcal{O}_K$ is the ring of integers of an imaginary quadratic number field $K$. Under this assumption, by the Strong Approximation Theorem \cite{w}, $\Gamma$ is dense in $PSL_2(\mathcal{O}_P)$ for almost all prime ideals $P$. If we define a natural map
\begin{equation}
\phi: PSL_2(\mathcal{O}_P)\rightarrow PSL_2(\mathcal{O}_P/\pi\mathcal{O}_P),
\end{equation}
then, using the fact that $\Gamma$ is dense in $PSL_2(\mathcal{O}_P)$, we can get the following surjection
\begin{equation}
\Gamma \rightarrow PSL_2(\mathcal{O}_P)/\text{ker}~\phi.
\end{equation}
As $\mathcal{O}_P/\pi\mathcal{O}_P$ is isomorphic to $\mathcal{O}_K/P\mathcal{O}_K$, (2.6) and (2.7) give the surjective map
\begin{equation}
\Gamma \rightarrow PSL_2(\mathcal{O}_K/P\mathcal{O}_K).
\end{equation}

The above examples are particularly important because it is possible to calculate the indices of the various congruence subgroups explicitly in these cases. For example, if $I_S=P_1...P_r$ is a square free ideal of $\mathcal{O}_S$ such that the maps $\Gamma \rightarrow PSL_2(\mathcal{O}_S/P_i)$ are surjective for all prime ideals $P_i$, then the index of $\Gamma/\Gamma(I_S)$ is given by (2.3). Furthermore, under the same assumption, it can be shown that the indices $[\Gamma:\Gamma_0(I)]$ and $[\Gamma:\Gamma_1(I)]$ are equal to
\begin{equation}\label{2.2}
\prod^r_1 (\text{N}{(P_i)}+1),
\end{equation}
\begin{equation}\label{2.3}
\dfrac{1}{2}\prod^r_{i=1}(\text{N}^2{(P_i)}-1)
\end{equation}
respectively (see Chapter 4 in \cite{mi} for details).

\subsection{Injectivity Radius}
The \emph{injectivity radius} of a Riemannian manifold $M$ at a point $x\in M$, $\text{inj}_x(M)$, is the largest radius for which the exponential map at $x$ is a diffeomorphism. The upper injectivity radius, $\overline{\text{inj}}(M)$, is the supremum of $\text{inj}_x(M)$ as $x$ varies over $M$, and the lower injectivity radius, $\underline{\text{inj}}(M)$, is the infimum of $\text{inj}_x(M)$ as $x$ varies over $M$. In particular, when $M$ is hyperbolic, the upper injectivity radius of $M$ is equal to
\begin{equation}\label{2.8}
\frac{1}{2}\text{sup}\left\{\right.\text{min}_{g\neq I, g\in \Gamma} (d_{\mathbb{H}^3}(x,g(x)))~\left|~x\in \mathbb{H}^3 \right\}
\end{equation}
where $\Gamma$ is the fundamental group of $M$. Moreover if $M$ is closed, then the lower injectivity radius of $M$ has the same value as half of the shortest length of a closed geodesic of $M$.

Bachmann, Cooper and White proved the following theorem which provides an important method for bounding the Heegaard genus in terms of the injectivity radius.
\\
\begin{theorem}\cite{bac}
If $M$ is a closed hyperbolic 3-manifold and $r=\overline{\text{inj}}(M)$, then
\begin{align*}
\emph{Heegaard genus of}~M \geq \dfrac{1}{2}\cosh r.
\end{align*}
\end{theorem}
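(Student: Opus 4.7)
The plan is to realize the Heegaard surface as (or near) a minimal surface, then compare a Gauss--Bonnet upper bound on its area with a lower bound coming from the embedded ball of radius $r$. Fix a minimum-genus Heegaard splitting $M = H_1 \cup_\Sigma H_2$, so that $g = \text{genus}(\Sigma)$ is the Heegaard genus, and choose $x \in M$ with $\text{inj}_x(M)$ arbitrarily close to $r = \overline{\text{inj}}(M)$. Then $B_r(x) \subset M$ is embedded and lifts isometrically to a round hyperbolic ball in $\mathbb{H}^3$.

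The first step is to produce a minimal surface $\Sigma_*$ of genus at most $g$ from the Heegaard sweepout, via Pitts--Rubinstein style min-max theory. Because $\Sigma_*$ is minimal in a manifold of constant sectional curvature $-1$, the Gauss equation forces the intrinsic curvature to satisfy $K_{\Sigma_*} \leq -1$, and Gauss--Bonnet gives the upper area bound
\[
\text{Area}(\Sigma_*) \;\leq\; -2\pi\,\chi(\Sigma_*) \;=\; 4\pi(g-1).
\]
The second step is to extract a matching lower bound from $B_r(x)$. The min-max procedure exhibits $\Sigma_*$ as a limit of sweepout leaves, some of which must pass near $x$ and separate $x$ from its antipode on $\partial B_r(x)$; passing to the limit, some component of $\Sigma_* \cap B_r(x)$ also separates those two antipodal points inside the ball. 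The area-minimizer among such separating surfaces in a hyperbolic ball of radius $r$ is the totally geodesic equatorial disk, and the hyperbolic disk of radius $r$ has area $2\pi(\cosh r - 1)$, so
\[
\text{Area}\bigl(\Sigma_* \cap B_r(x)\bigr) \;\geq\; 2\pi(\cosh r - 1).
\]

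Combining the two inequalities gives $4\pi(g-1) \geq 2\pi(\cosh r - 1)$, hence $g \geq \tfrac{1}{2}\cosh r + \tfrac{1}{2}$, which is slightly stronger than the claimed bound. The main obstacle will be the lower area estimate: one must rule out the possibility that the min-max limit $\Sigma_*$ degenerates onto a spine of $H_1$ or $H_2$ before it reaches $B_r(x)$, leaving no nontrivial separating piece inside the ball. This is where the mountain-pass character of the min-max construction must be used, together with the fact that every sweepout of $M$ by Heegaard-type surfaces is forced to pay area in a neighborhood of $x$ because the ball $B_r(x)$ is embedded; a careful choice of sweepout (minimizing width) should guarantee that the concentrated area shows up on $\Sigma_*$ rather than escaping to a spine.
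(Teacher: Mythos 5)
The theorem you are trying to prove is cited from Bachman--Cooper--White and is not reproved in this paper, so there is no in-paper proof to compare against; I will compare against the BCW argument. You have the right three ingredients --- Pitts--Rubinstein min-max to produce a minimal surface $\Sigma_*$ of genus at most $g$, the Gauss equation $K_{\Sigma_*}\le -1$ plus Gauss--Bonnet to get $\text{Area}(\Sigma_*)\le 4\pi(g-1)$, and an area lower bound coming from the embedded ball. However, your lower bound step contains a genuine error. A surface that merely separates two antipodal boundary points of $B_r(x)$ can have arbitrarily small area: a flat disk at height $1-\varepsilon$ in a unit ball cuts off a thin cap containing one pole with total disk area $\pi(2\varepsilon-\varepsilon^2)\to 0$, and the analogous construction works in a hyperbolic ball. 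So there is no bound $\text{Area}\ge 2\pi(\cosh r - 1)$ under that hypothesis, and the equatorial disk is not the minimizer for this problem. The condition actually used in BCW is a \emph{balancing} one: as the sweepout $\{\Sigma_t\}$ runs from a spine of $H_1$ to a spine of $H_2$, the volume of $B_r(x)$ lying on one side of $\Sigma_t$ runs continuously from $0$ to $\text{vol}(B_r(x))$, so at some time $t^*$ the slice $\Sigma_{t^*}$ bisects the volume of the ball, and a surface bisecting the volume of a hyperbolic ball of radius $r$ has area bounded below by a quantity growing like $\cosh r$.

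The second, related, problem is in where you try to locate this area. You insist that the min-max limit $\Sigma_*$ itself meets $B_r(x)$ in a substantial piece, and you rightly worry that $\Sigma_*$ might slide away from the ball --- but this worry cannot be resolved in the direction you want, because minimal surfaces in a hyperbolic $3$-manifold \emph{can} miss large embedded balls (geodesic spheres are convex, so at a nearest-point tangency one only gets $0 = H_{\Sigma}\le 2\coth\rho$, which is no contradiction). The clean fix is to route the lower bound through the \emph{width} rather than through $\Sigma_*\cap B_r(x)$: the balancing observation shows that \emph{every} admissible sweepout has some slice of area at least the bisection bound, hence the min-max width $W$ is at least that bound; Pitts--Rubinstein produces a minimal surface $\Sigma_*$ with $\text{Area}(\Sigma_*)=W$ and genus at most $g$ (after the standard reduction to strongly irreducible splittings, which you should also mention), and Gauss--Bonnet is applied to $\Sigma_*$. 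You never need $\Sigma_*$ to touch the ball. Replacing ``antipodal separation on the limit surface'' by ``volume bisection in every sweepout $\Rightarrow$ lower bound on width'' closes both gaps and recovers the BCW inequality.
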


Although the above theorem was proved for closed manifolds, using Dehn filling we can extend the theorem as follows (see \cite{ru} for a similar result).

\begin{corollary}
The above inequality holds for finite-volume non-compact hyperbolic 3-manifolds.
\end{corollary}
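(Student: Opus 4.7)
The plan is to approximate $M$ by closed hyperbolic 3-manifolds via Dehn filling, apply Theorem 2.1 to each approximant, and then use the monotonicity of Heegaard genus under Dehn filling. Set $r=\overline{\text{inj}}(M)$, which is finite since $\text{vol}(M)<\infty$. The first observation I would establish is that $r$ is actually realized in the thick part of $M$: deep inside each cusp the injectivity radius tends to $0$ (a point at hyperbolic depth $t$ in a cusp has injectivity radius of order $e^{-t}$), so the set $\{x\in M:\text{inj}_x(M)\geq r/2\}$ is a compact subset of $M$. Since $x\mapsto\text{inj}_x(M)$ is continuous, the supremum $r$ is attained at some point $x_\infty$ lying in this compact region.

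Next I would perform Dehn fillings on every cusp of $M$ with slopes whose geodesic lengths tend to $\infty$. By Thurston's hyperbolic Dehn surgery theorem, for all sufficiently long slopes the filled manifolds $M_n$ are closed and hyperbolic, and moreover $M_n\to M$ in the geometric topology. The geometric convergence provides, for any fixed compact neighborhood of $x_\infty$, bi-Lipschitz embeddings $\phi_n$ of that neighborhood into $M_n$ whose bi-Lipschitz constants tend to $1$. Hence $\text{inj}_{\phi_n(x_\infty)}(M_n)\to \text{inj}_{x_\infty}(M)=r$, so that $\overline{\text{inj}}(M_n)\geq r-\epsilon_n$ with $\epsilon_n\to 0$. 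Applying Theorem 2.1 to the closed hyperbolic manifold $M_n$ now gives
\[
g(M_n)\;\geq\;\tfrac{1}{2}\cosh\!\bigl(r-\epsilon_n\bigr).
\]

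The remaining ingredient is the well-known fact that Dehn filling cannot increase Heegaard genus: a Heegaard splitting $M=V_1\cup_\Sigma V_2$, with $V_1$ a genus-$g$ handlebody and $V_2$ a compression body containing $\partial M$, produces a Heegaard splitting of $M_n$ of the same genus by absorbing the filling solid tori into $V_2$ (which turns $V_2$ into a handlebody of the same genus). Therefore $g(M)\geq g(M_n)\geq\tfrac{1}{2}\cosh(r-\epsilon_n)$, and letting $n\to\infty$ yields $g(M)\geq\tfrac{1}{2}\cosh r$, as required.

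The main technical subtlety is justifying that the injectivity radius at the specific point $x_\infty$ survives the hyperbolic Dehn filling deformations with only an $o(1)$ loss; this is precisely the geometric convergence clause of Thurston's Dehn surgery theorem, and it is the only place where genuine hyperbolic geometry enters. Once that is in hand, the monotonicity of Heegaard genus under filling and Theorem 2.1 combine essentially formally to give the desired inequality.
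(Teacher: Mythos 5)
Your proposal follows essentially the same route as the paper's proof: approximate $M$ by closed manifolds $M_n$ obtained by Dehn filling, use that Dehn filling does not increase Heegaard genus to get $g(M)\geq g(M_n)$, argue via geometric convergence on the thick part that $\overline{\text{inj}}(M_n)\to \overline{\text{inj}}(M)$, and then pass Theorem 2.1 through the limit. Your treatment of why the supremum of the injectivity radius survives the geometric limit (locating the maximizing point $x_\infty$ in a compact thick region and using the bi-Lipschitz embeddings from Thurston's Dehn surgery theorem) is a somewhat more explicit version of the paper's remark that the $\epsilon$-thick parts of $M_n$ are approximately isometric to that of $M$; in fact you only need $\liminf_n \overline{\text{inj}}(M_n)\geq r$, which is exactly what your bi-Lipschitz argument gives, whereas the paper asserts the full limit $r=\lim r_n$ without needing it.
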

\begin{proof}
 A finite-volume non-compact hyperbolic 3-manifold $M$ can be approximated as a geometric limit of closed manifolds, that is, $M=\lim {M_n}$ where $\{M_n\}$ are closed manifolds obtained by Dehn filling. If we define $g$ (resp. $g_n$) to be the Heegaard genus of $M$ (resp. $M_n$), then the inequality $g\geq g_n$ is true for all $n$ because Dehn filling never increases the Heegaard genus. Let $r$ (resp. $r_n$) be the upper injectivity radius of $M$ (resp. $M_n$). Then there exists an $\epsilon>0$ such that the $\epsilon$-thick part $M_{[\epsilon,\infty)}$ of $M$ has the upper injectivity radius the same as $M$. Moreover, we can choose a uniform $\epsilon>0$ so that this is true for all $M_n$. Because $M_{n[\epsilon,\infty)}$ is approximately isometric to $M_{[\epsilon,\infty)}$ as $n\rightarrow \infty$, we get $r=\lim_{n \rightarrow \infty} r_n$. Now Corollary 2.2 follows from this, $g\geq g_n$ and Theorem 2.1.
\end{proof}
$\quad$\\

The above theorem and corollary will be applied to calculate lower bounds for Heegaard genera as we mentioned in Section 1. Specifically we use $\underline{\text{inj}}(M)$ in the proof of Theorem 1.2 and $\overline{\text{inj}}(M)$ in the proof of Theorem 1.3.

\subsection{Closed Geodesics}
Next we quickly review closed geodesics. A closed geodesic of a hyperbolic 3-manifold is always induced by a hyperbolic element of its fundamental group as an invariant axis. We can detect its length by the trace value of corresponding hyperbolic element (Chapter 11 in \cite{mr}). Concerning the asymptotic number of closed geodesics of a given closed hyperbolic 3-manifold $M$ as a function of length, we have the following nice formula:
\\
\\
\textbf{Prime Geodesic Theorem} \cite{mar}
\emph{For a closed hyperbolic 3-manifold $M$, the number of primitive elements of length less than or equal to $l$ is asymptotic to $e^{2 l}/{2 l}$ as $l$ goes to infinity.
}
\\

Here a \emph{primitive element} of $\Gamma$ is one which is not a nontrivial power of any element in $\Gamma$. If we denote $\#(l)$ the number of closed geodesics of length less than equal to $l$ in $M$, then we can get the upper bound of $\#(l)$ using the Prime Geodesic Theorem.

\begin{corollary}
Suppose that $M$, $\Gamma$ and $\#(l)$ are the same as above. Then there exists a constant $c'$ depending only on $\Gamma$ such that
\begin{equation*}
\#(l)< c'e^{2l}.
\end{equation*}
\end{corollary}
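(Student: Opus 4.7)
The plan is to reduce the count of all closed geodesics to the count of primitive ones, where the Prime Geodesic Theorem applies directly. Every closed geodesic in $M$ is the $k$-fold iterate of a unique primitive closed geodesic for some integer $k \geq 1$, so if I write $P(l)$ for the number of \emph{primitive} closed geodesics of length $\leq l$, then
\begin{equation*}
\#(l) \;=\; \sum_{k \geq 1} P(l/k).
\end{equation*}
This sum is actually finite: if $\ell_0 > 0$ denotes the length of the shortest closed geodesic of $M$ (which exists and is positive since $M$ is closed), then only the terms with $k \leq l/\ell_0$ contribute.

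Next I would apply the Prime Geodesic Theorem to each term. Since $P(l) \sim e^{2l}/(2l)$ as $l \to \infty$, there is a constant $C_1$, depending only on $\Gamma$, with $P(x) \leq C_1 e^{2x}$ for all $x \geq \ell_0$ (and $P(x) = 0$ otherwise). The $k=1$ term already contributes at most $C_1 e^{2l}$, which is the target order. I would then check that the remaining terms are of strictly smaller order: for $k \geq 2$,
\begin{equation*}
\sum_{k = 2}^{\lfloor l/\ell_0\rfloor} P(l/k) \;\leq\; C_1 \sum_{k=2}^{\lfloor l/\ell_0\rfloor} e^{2l/k} \;\leq\; C_1 \cdot \frac{l}{\ell_0} \cdot e^{l},
\end{equation*}
using that $2l/k \leq l$ when $k \geq 2$. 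This quantity is $o(e^{2l})$, so it is bounded above by $e^{2l}$ for all $l$ larger than some $l_1$ depending only on $\Gamma$.

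Combining the two bounds, there exists a constant $C_2$ depending only on $\Gamma$ such that $\#(l) \leq C_2 e^{2l}$ for all $l \geq l_1$. For $l < l_1$, $\#(l)$ is a bounded function (bounded by $\#(l_1)$, which is finite and depends only on $\Gamma$), so by enlarging the constant if necessary we obtain a single $c'$, depending only on $\Gamma$, with $\#(l) < c' e^{2l}$ for all $l$. There is no substantive obstacle here; the only mild point of care is bookkeeping the contributions from non-primitive (iterated) geodesics and confirming that the asymptotic constant in the Prime Geodesic Theorem, as well as $\ell_0$, depend only on $\Gamma$.
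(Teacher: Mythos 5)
Your proof is correct and follows the same route as the paper: decompose $\#(l)$ as a sum over $k$ of the number of primitive geodesics of length $\leq l/k$, truncate the sum at $k = \lfloor l/\ell_0 \rfloor$ using the shortest geodesic length, apply the Prime Geodesic Theorem to bound the primitive count, and absorb small $l$ into the constant. The only cosmetic difference is that you bound the tail terms by $C_1 e^{2l/k} \leq C_1 e^l$ for $k \geq 2$, whereas the paper simply bounds every term by $\#_{prm}(l)$; both yield the claimed $c' e^{2l}$.
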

\begin{proof}
Put $\#_{prm}(l)$ the number of primitive elements of length less than or equal to $l$. Then, by the Prime Geodesic Theorem, there exists $d>0$ such $\#_{prm}(l)<\dfrac{e^{2l}}{l}$ for all $l>d$. Since every hyperbolic element $h \in \Gamma$ is of the form $g^m$ where $g$ is a primitive element in $\Gamma$ and $m \in \mathbb{N}$, we have
\begin{equation}
\#(l)=\sum_{i=1}^{\infty}\#_{prm}(l/i).
\end{equation}
Let $s$ be the length of a shortest geodesic of $M$. Because $\#_{prm}(l/i)=0$ for $i>\lfloor l/s\rfloor$, we can rewrite (2.12) as
\begin{equation}
\#(l)=\sum_{i=1}^{\infty}\#_{prm}(l/i)=\sum_{i=1}^{\lfloor l/s \rfloor}\#_{prm}(l/i).
\end{equation}
Clearly $\#_{prm}(l/i)\leq \#_{prm}(l)$. Thus, if $l>d$, then $\#_{prm}(l/i)<\dfrac{e^{2l}}{l}$ for $1 \leq i \leq \lfloor l/s \rfloor$. Combining this with (2.13), we get
\begin{equation*}
\#(l)<\lfloor l/s \rfloor \dfrac{e^{2l}}{l} \leq \dfrac{e^{2l}}{s}
\end{equation*}
for $l>d$. If we take $c'>0$ bigger than $\#(d)$ and $1/s$, then $\#(l)< c'e^{2l}$ for all $l>0$.
\end{proof}
$\quad$\\

\subsection{Number Theory}
Lastly we quote two important theorems from number theory and deduce two corollaries of them.
\\
\\
\textbf{Prime Number Theorem} \cite{bate}
\emph{Let $\pi(x)$ be the number of rational primes which are less than or equal to $x$. Then $\pi(x)$ is asymptotic to $\dfrac{x}{\log x}$ as $x$ goes to infinity and we denote this by
\begin{equation*}
\pi(x)\sim \dfrac{x}{\log x}.
\end{equation*}
In addition, this is equivalent to
\begin{equation*}
\theta(x)\sim x.
\end{equation*}
where $\theta(x)=\sum_{p<x}\log p$}.
\\
\\
\textbf{Chebotarev's Density Theorem} \cite{nir}
\emph{Let $K$/$\mathbb{Q}$ be a number field and $L$ be the Galois closure of $K$. If $S_1$ denotes the set of all primes of $\mathbb{Z}$ which split completely over $K$, then the following inequality holds.
\begin{equation*}
 \liminf_{x \rightarrow \infty} \dfrac{\#\{p\in S_1~|~p \leq x\}}{\#\{p~|~p \leq x\}} \geq \dfrac{1}{n}
\end{equation*}
where $n=[L:\mathbb{Q}]$.}

\begin{corollary}
With the same notations as in the above theorem, we can find a subset $S_2$ of $S_1$ such that
\begin{equation*}
 \lim_{x \rightarrow \infty} \dfrac{\#\{p\in S_2~|~p \leq x\}}{\#\{p~|~p \leq x\}}=\dfrac{1}{n},
\end{equation*}
i.e.,
\begin{equation*}
\lim_{x \rightarrow \infty} \dfrac{\pi_{S_2}(x)}{x/\log x}= \dfrac{1}{n}
\end{equation*}
where $\pi_{S_2}(x)$ is the number of primes of $S_2$ which are less than or equal to $x$.
Furthermore, the above formulas are equivalent to
\begin{equation*}
\lim_{x \rightarrow \infty} \dfrac{\theta_{S_2}(x)}{x}= \dfrac{1}{n}
\end{equation*}
where $\theta_{S_2}(x)=\sum_{p<x,~p\in S_2}\log p$.
\end{corollary}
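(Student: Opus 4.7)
\emph{Plan.} The corollary asserts two things: existence of a subset $S_2\subseteq S_1$ whose density in the primes is exactly $1/n$, and an equivalence between the $\pi_{S_2}$ and $\theta_{S_2}$ asymptotics. For the first, I would enumerate the rational primes $p_1<p_2<\cdots$ and apply a greedy thinning: include $p_k$ in $S_2$ if and only if $p_k\in S_1$ and the running count $A_{k-1}:=\#\{i\le k-1:p_i\in S_2\}$ satisfies $A_{k-1}<k/n$. Then $A_k\le k/n$ at every stage, giving $\limsup \pi_{S_2}(x)/\pi(x)\le 1/n$. For the matching lower bound, note that a prime of $S_1$ is omitted only when the running ratio $A/\pi$ has already reached $1/n$, after which the rule readmits every subsequent prime of $S_1$ until the ratio slips back below $1/n$. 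Combined with the Chebotarev input $\liminf\#\{p\in S_1:p\le x\}/\pi(x)\ge 1/n$ (and in fact the equality of natural densities, which is the standard form of the theorem), any sustained shortfall in $A/\pi$ is ruled out, so $\lim \pi_{S_2}(x)/\pi(x)=1/n$. Combined with the Prime Number Theorem $\pi(x)\sim x/\log x$ this yields the first two displayed formulas.

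For the equivalence with the $\theta$ form, I would apply partial (Abel) summation. The identity
\begin{equation*}
\theta_{S_2}(x)=\pi_{S_2}(x)\log x-\int_2^{x}\frac{\pi_{S_2}(t)}{t}\,dt
\end{equation*}
converts $\pi_{S_2}(x)\sim x/(n\log x)$ into $\theta_{S_2}(x)\sim x/n$: the leading term is $\sim x/n$ and the integral is $O(x/\log x)=o(x)$. The reverse identity
\begin{equation*}
\pi_{S_2}(x)=\frac{\theta_{S_2}(x)}{\log x}+\int_2^{x}\frac{\theta_{S_2}(t)}{t(\log t)^2}\,dt
\end{equation*}
recovers $\pi_{S_2}(x)\sim x/(n\log x)$ from $\theta_{S_2}(x)\sim x/n$ by the analogous estimation. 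This mirrors the classical equivalence between the two forms of the Prime Number Theorem already quoted above.

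\emph{Main obstacle.} The delicate point is the lower bound on $\pi_{S_2}(x)/\pi(x)$: the greedy rule enforces the upper bound $A/\pi\le 1/n$ automatically, but controlling the $\liminf$ requires that every temporary dip below $1/n$ is healed by subsequent primes of $S_1$, and this healing must be tracked semi-quantitatively against the Chebotarev input. The Abel-summation step is then standard and essentially mechanical.
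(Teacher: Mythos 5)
Your Abel‐summation half is correct and is the same route the paper takes (the paper simply cites the standard proof of the equivalence of $\pi(x)\sim x/\log x$ and $\theta(x)\sim x$). The interesting part is the first half, and here there is a genuine conceptual gap in the greedy argument as stated. From the hypothesis $\liminf_{x\to\infty}\pi_{S_1}(x)/\pi(x)\ge 1/n$ \emph{alone}, one cannot in general extract a subset $S_2\subseteq S_1$ of natural density exactly $1/n$, so no thinning rule, greedy or otherwise, can succeed at that level of generality. To see this, label primes $p_1<p_2<\cdots$ and take $n=2$, $S_1=\bigcup_j\{p_k:a_j\le k\le b_j\}$ with $a_1=1$, $a_{j+1}=2b_j$, and $b_j$ growing superexponentially so that $b_{j-1}=o(b_j)$. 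Then $\pi_{S_1}(p_k)/k$ dips to $\approx 1/2$ at $k=2b_j-1$ and rises to $\approx 1$ at $k=b_j$, so $\liminf=1/2$. But any $S_2\subseteq S_1$ with density $1/2$ would need $\pi_{S_2}(p_{2b_j-1})\approx b_j$, while $\pi_{S_2}(p_{2b_j-1})=\pi_{S_2}(p_{b_j})$ (the gap contains no $S_1$-primes), forcing $\pi_{S_2}(p_{b_j})/b_j\approx 1$, a contradiction. In particular the greedy rule's running ratio $A_k/k$ will itself dip well below $1/2$ at the ends of the gaps and never recover, so the lower bound you flag as ``the delicate point'' actually fails.

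You sense this difficulty and, parenthetically, invoke the equality form of Chebotarev (the set of completely split primes has natural density \emph{exactly} $1/n$). That is the right fix; but once you grant it, the greedy construction is superfluous: $S_2=S_1$ already satisfies $\lim\pi_{S_2}(x)/\pi(x)=1/n$, and the whole thinning apparatus can be dropped. So the proposal is not wrong in outcome, but it mixes two incompatible strengths of hypothesis: it advertises working from the $\liminf$ form (which is insufficient) while silently relying on the exact-density form (under which the construction is unnecessary). For what it is worth, the paper's own proof is equally terse --- it asserts that the first statement ``follows'' from $\liminf\ge 1/n$, which is not literally true for an arbitrary set with that $\liminf$ --- and it, too, implicitly rests on the full Chebotarev statement. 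If you keep the greedy construction, state Chebotarev in the equality form up front and make clear that the thinning only trims the (asymptotically negligible) excess above $k/n$; otherwise, replace the first paragraph with the one-line observation $S_2=S_1$. The Abel-summation paragraph can stand as written.
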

\begin{proof}
 From $\liminf_{x \rightarrow \infty} \dfrac{\#\{p\in S_1~|~p \leq x\}}{\#\{p~|~p \leq x\}} \geq \dfrac{1}{n}$, the first result follows. The second argument can be deduced by copying the analogous steps of the proof of the equivalence of $\pi(x)\sim \dfrac{x}{\log x}$ and $\theta(x)\sim x$ (for example, see Chapter 4 in \cite{bate}).
\end{proof}
\begin{corollary}
With the same notations as in Corollary 2.4, let $p_k$ be the k-th prime number in $S_2$ and $d_k=p_1...p_k$. Then, for sufficiently large $k$, $p_{k+1}$ is less than $2n \log d_k$ and so, for any sufficiently large natural number $x$, there exists a prime number $p\in S_2$ such that $p\nmid x$ and $p< 2n \log x$.
\end{corollary}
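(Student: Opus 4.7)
The plan is to exploit the asymptotic $\theta_{S_2}(x)/x \to 1/n$ from Corollary 2.4 by recognizing $\log d_k$ as a value of $\theta_{S_2}$. Since the primes of $S_2$ that are strictly less than $p_{k+1}$ are exactly $p_1,\ldots,p_k$, one has
\begin{equation*}
\log d_k = \sum_{i=1}^{k} \log p_i = \theta_{S_2}(p_{k+1}).
\end{equation*}
Dividing both sides by $p_{k+1}$ and invoking Corollary 2.4, the ratio $\log d_k / p_{k+1}$ tends to $1/n$; in particular, for all large enough $k$ it exceeds $1/(2n)$, which rearranges to $p_{k+1} < 2n \log d_k$.

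The second assertion then follows by a pigeonhole-style argument. Given a sufficiently large $x$, let $k$ be the maximal index with $d_k \leq x$. By maximality, $d_{k+1} = d_k \cdot p_{k+1} > x$, so the primes $p_1,\ldots,p_{k+1}$ cannot all divide $x$ (otherwise their product $d_{k+1}$ would divide $x$ and be at most $x$). Any prime $p$ among $p_1,\ldots,p_{k+1}$ failing to divide $x$ then satisfies $p \leq p_{k+1} < 2n \log d_k \leq 2n \log x$, giving the required element of $S_2$.

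The only real care needed is bookkeeping: one must choose the threshold for ``sufficiently large $x$'' so that the associated $k$ lies past the range where the first statement becomes valid, and track the strict-vs.-weak inequality conventions in the definition of $\theta_{S_2}$ and the indexing of the $p_k$. I do not expect any serious obstacle beyond this, since the entire argument is a direct consequence of the asymptotic density statement already established in Corollary 2.4.
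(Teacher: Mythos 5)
Your proposal is correct and fills in precisely the steps the paper leaves implicit: the identification $\theta_{S_2}(p_{k+1})=\log d_k$ (using the strict inequality in the definition of $\theta_{S_2}$), the deduction $p_{k+1}<2n\log d_k$ for large $k$ from $\theta_{S_2}(x)/x\to 1/n$, and the pigeonhole step picking the maximal $k$ with $d_k\le x$. This is the same approach the paper intends with its one-line "immediately follows" proof.
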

\begin{proof}
This immediately follows from the definition of $\theta_{S_2}(x)$ and the formula $\lim_{x \rightarrow \infty} \dfrac{\theta_{S_2}(x)}{x}= \dfrac{1}{n}$.
\end{proof}

\section{Proof of Theorems 1.2 and 1.4}
In this section we prove Theorems 1.2 and 1.4. Throughout Sections 3 and 4, $\Gamma$ is the fundamental group of the closed hyperbolic 3-manifold $M$ and $\hat{\Gamma}$ is the inverse image of $\Gamma$ in $SL_2(O_S)$. We also denote the two inverse images of $\gamma \in \Gamma$ in $\hat{\Gamma}$ by $\pm\hat{\gamma}$. \\
\\
\emph{Proof of Theorem 1.2} We start by sketching the key idea of the proof. For a given closed geodesic of length less than or equal to $l$, using the facts that a closed geodesic is always induced by a hyperbolic element and that $\#(l)$ is finite, we find a prime ideal $P$ (of $\mathcal{O}_S$) such that its principal congruence group $\Gamma(P)$ doesn't contain any hyperbolic elements of length less than or equal to $l$. Then, applying Theorem 2.1 and formula (2.10), we calculate bounds for the Heegaard genus and the index of $\Gamma(P)$. The next lemma, which we'll prove in next section, is important for calculating these bounds.

\begin{lemma}
For $\omega \in \Gamma$ of hyperbolic length less than or equal to $l$, there exists $\alpha, \beta \in \mathcal{O}_K$ such that $\pm \emph{tr} \hat{\omega} =\pm \alpha/\beta$ and
\begin{equation*}
|\emph{N}(\alpha\pm 2\beta)|<(C_3)^{l}
\end{equation*}
where $C_3\geq 1$ is a constant which depends only on $\Gamma$.
\end{lemma}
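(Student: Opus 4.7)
\emph{Proof proposal.} The plan is to combine a geometric bound on the archimedean size of $\text{tr}\,\hat\omega$ at the distinguished embedding with an arithmetic bound on the denominator of $\text{tr}\,\hat\omega$ in $\mathcal{O}_K$ and on its Galois conjugates. Since $\hat\omega\in SL_2(\mathbb{C})$ is hyperbolic with translation length $l_\omega\le l$, its eigenvalues $\lambda^{\pm 1}$ satisfy $|\lambda|=e^{l_\omega/2}$, and hence at the embedding $K\hookrightarrow\mathbb{C}$ coming from $\Gamma\subset PSL_2(\mathbb{C})$ one has
\begin{equation*}
|\text{tr}\,\hat\omega\pm 2|\,\le\,2\cosh(l_\omega/2)+2\,\le\,4e^{l/2}.
\end{equation*}

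Next I would bound the denominator of $\text{tr}\,\hat\omega$ and its other Galois conjugates via the word length of $\omega$. Fix a finite generating set $g_1,\ldots,g_k$ of $\Gamma$ with lifts $\hat g_1,\ldots,\hat g_k\in SL_2(\mathcal{O}_S)$, and choose $s\in\mathcal O_K$ (a product of primes of $S$) large enough that every entry of every $\hat g_i^{\pm 1}$ lies in $s^{-1}\mathcal O_K$. Because $M$ is closed, the Milnor--Schwarz lemma provides a quasi-isometry between $\Gamma$ with the word metric and $\mathbb{H}^3$ with a chosen basepoint $x_0$. Replacing $\omega$ by a conjugate---permissible since trace is a conjugation invariant---I may assume its axis passes within the diameter $D$ of $M$ from $x_0$, so that $d_{\mathbb{H}^3}(x_0,\omega(x_0))\le l+2D$ and consequently the word length satisfies $|\omega|\le Al+B$ for constants $A,B$ depending only on $\Gamma$. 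Multiplying out a word of this length yields $\hat\omega=s^{-N}A$ with $A\in M_2(\mathcal O_K)$ and $N\le Al+B$. Setting $\beta=s^{N}$ and $\alpha=\text{tr}\,A\in\mathcal O_K$, one has $\text{tr}\,\hat\omega=\alpha/\beta$; and for each non-identity complex embedding $\sigma\colon K\to\mathbb C$, writing $c_\sigma=\max_i\|\sigma(\hat g_i^{\pm 1})\|$ gives $|\sigma(\text{tr}\,\hat\omega)\pm 2|\le 2c_\sigma^{Al+B}+2$.

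Because $\alpha\pm 2\beta=\beta\,(\text{tr}\,\hat\omega\pm 2)$, assembling these bounds produces
\begin{equation*}
|N_{K/\mathbb Q}(\alpha\pm 2\beta)|=|N(s)|^{N}\prod_{\sigma}|\sigma(\text{tr}\,\hat\omega)\pm 2|\,\le\,C_3^{\,l}
\end{equation*}
for a constant $C_3\ge 1$ depending only on $\Gamma$, after absorbing all the generator-dependent constants into its base.

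The main obstacle is the passage from ``translation length $\le l$'' to ``word length $O(l)$'': an arbitrary representative of a short conjugacy class can have enormous word length, and one must first replace $\omega$ by a conjugate whose axis is pulled back into a compact neighbourhood of $x_0$. This is exactly the reason the lemma is phrased in terms of $\text{tr}\,\hat\omega$, which is insensitive to conjugation; once this reduction is in place, the Milnor--Schwarz inequality for the closed manifold $M$ converts the geometric length bound into the linear word-length estimate driving the rest of the argument.
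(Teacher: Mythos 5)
Your proposal is correct and follows essentially the paper's strategy: a linear word-length bound coming from compactness/quasi-isometry, an exponential bound on all Galois conjugates of the entries of $\hat\omega$ (and on a common denominator) in terms of that word length, and then the product over embeddings of $K$ to get the norm bound. The one place where you add something is the explicit replacement of $\omega$ by a conjugate whose axis passes within the diameter $D$ of a basepoint $x_0$; the paper's proof asserts $|\omega|_R\le c'l$ directly from the quasi-isometry without observing that the Milnor--Schwarz estimate controls $d(x_0,\omega x_0)$ rather than the translation length, and your conjugation step (harmless because the lemma's conclusion involves only $\text{tr}\,\hat\omega$, a conjugacy invariant) is exactly what closes that implicit gap.
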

The proof of the lemma is not difficult but it involves some preliminaries. So we'll prove it independently in Section 4. Here, using the lemma, we prove the following claim.\\

\begin{lemma}
For any $d>0$, there exists a Hecke-type congruence subgroup $\Gamma_1(P_S)$ such that $[\Gamma:\Gamma_1(P_S)]>d$ and
\begin{equation}\label{3.1}
\text{Heegaard genus of }M_1(P_S) \geq [\Gamma:\Gamma_1(P_S)]^{\tfrac{1}{8}-\tfrac{\epsilon}{2}}
\end{equation}
where $M_1(P_S)$ is the cover induced by $\Gamma_1(P_S)$ and $\epsilon>0$ is an any small number.
\end{lemma}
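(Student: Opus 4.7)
The plan is to fix a target shortest-geodesic length $l=l(d,\epsilon)$ for $M_1(P_S)$ and produce a prime ideal $P\subset\mathcal{O}_K$ lying over a rational prime $p$ that splits completely in $K$, such that (setting $P_S:=P\mathcal{O}_S$) the group $\Gamma_1(P_S)$ contains no nontrivial element of $\Gamma$ of translation length $\le l$, while $p$ stays of size $O(l\,e^{2l})$. Applied to $\underline{\text{inj}}(M_1(P_S))$, Theorem 2.1 then yields (3.1) once $l$ is chosen large enough in terms of $d$ and $\epsilon$.

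To build $P$, first use Corollary 2.3 to list the $\le c'e^{2l}$ conjugacy classes of hyperbolic elements of $\Gamma$ of translation length $\le l$, and pick a representative $\omega_j$ from each. Lemma 3.1 supplies $\alpha_j,\beta_j\in\mathcal{O}_K$ with $\text{tr}\,\hat\omega_j=\alpha_j/\beta_j$ and $|N_{K/\mathbb{Q}}(\alpha_j\pm 2\beta_j)|\le C_3^{\,l}$; since $\text{tr}\,\hat\omega_j\in\mathcal{O}_S$, we may further assume $\beta_j$ is an $S$-unit. Form
\[
N_l \;:=\; A\cdot\prod_{j}\bigl|N(\alpha_j-2\beta_j)\,N(\alpha_j+2\beta_j)\bigr|,
\]
where $A$ is a fixed integer absorbing the rational primes lying under $S$ together with all ``small'' rational primes $q$ for which $\tfrac{1}{2}(q^2-1)\le d$. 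Since $\log N_l = O(l\,e^{2l})$, Corollary 2.5 produces a rational prime $p$, splitting completely in $K$, with $p\nmid N_l$ and $p<2n\log N_l = O(l\,e^{2l})$. Pick any prime $P$ of $\mathcal{O}_K$ above $p$; since $p\notin S$ and $p$ splits completely, the discussion after (2.4) guarantees surjectivity of $\Gamma\twoheadrightarrow PSL_2(\mathcal{O}_S/P_S)$, so formula (2.10) gives $[\Gamma:\Gamma_1(P_S)]=\tfrac{1}{2}(p^2-1)>d$ by the choice of $A$.

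Next verify that no nontrivial $\omega\in\Gamma_1(P_S)$ has $l(\omega)\le l$. Any such $\omega$ would be $\Gamma$-conjugate to some $\omega_j$, so $\text{tr}\,\hat\omega=\pm\alpha_j/\beta_j$, while membership in $\Gamma_1(P_S)$ forces $\text{tr}\,\hat\omega\equiv\pm 2\pmod{P_S}$. Because $P\notin S$, $\beta_j$ is a unit mod $P$, hence $P\mid(\alpha_j\mp 2\beta_j)$ for an appropriate sign, and taking norms gives $p\mid N_l$, contradicting the choice of $p$. Therefore the shortest closed geodesic of $M_1(P_S)$ has length exceeding $l$, so $\underline{\text{inj}}(M_1(P_S))>l/2$, and Theorem 2.1 (applied via $\overline{\text{inj}}\ge\underline{\text{inj}}$) gives Heegaard genus of $M_1(P_S)\ \ge\ \tfrac{1}{2}\cosh(l/2)>\tfrac{1}{4}e^{l/2}$. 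Coupled with $[\Gamma:\Gamma_1(P_S)]\le p^2/2=O(l^2 e^{4l})$, we obtain
\[
[\Gamma:\Gamma_1(P_S)]^{\tfrac{1}{8}-\tfrac{\epsilon}{2}}\;\le\; C\,l^{\tfrac{1}{4}-\epsilon}\,e^{(\tfrac{1}{2}-2\epsilon)l},
\]
and since $e^{2\epsilon l}$ eventually dominates any polynomial in $l$, taking $l$ sufficiently large produces (3.1).

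The main obstacle is Lemma 3.1 itself. Only the exponential-in-$l$ control on $|N(\alpha\pm 2\beta)|$ permits Corollary 2.5 to place a suitable prime $p$ inside the narrow window $O(l\,e^{2l})$, which in turn preserves the multiplicative slack $e^{2\epsilon l}$ that allows the Heegaard genus bound $\tfrac{1}{4}e^{l/2}$ from Theorem 2.1 to beat the index. Any substantially worse trace-bound would force either $p$ or the index past this window and break the comparison; everything else in the argument is a routine combination of the counting estimate (Corollary 2.3), the prime-finding estimate (Corollary 2.5), and the surjectivity/index formula (2.10).
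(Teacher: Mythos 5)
Your proof is correct and follows essentially the same approach as the paper: bound $\prod_j\bigl|N(\alpha_j-2\beta_j)N(\alpha_j+2\beta_j)\bigr|$ via Corollary 2.3 and Lemma 3.1, invoke Corollary 2.5 to find a small completely-split prime $p$ coprime to all these norms, and compare the Heegaard-genus lower bound $\tfrac14 e^{l/2}$ from Theorem 2.1 against the index bound $\tfrac12 p^2=O(l^2 e^{4l})$ coming from (2.10). The only differences are cosmetic --- you use $\Gamma_1$ consistently in the non-containment argument (the paper's Claim 3.3 writes $\Gamma_0$, for which the trace congruence $\mathrm{tr}\equiv\pm 2$ does not literally hold), and you enforce $[\Gamma:\Gamma_1(P_S)]>d$ by folding extra factors into $N_l$ rather than observing that $p(l)\to\infty$.
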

\begin{proof}
First, let $l>0$ be an arbitrary number and
\begin{equation*}
\{\pm\text{tr}\hat{\omega}_{1},\pm\text{tr}\hat{\omega}_{2},...,\pm \text{tr}\hat{\omega}_{{r(l)}}\}
\end{equation*}
be the set of traces of images in $\hat{\Gamma}$ of all hyperbolic elements of length less than or equal to $l$. Then, by Corollary 2.3, $r(l)\leq \#(l) < c'e^{2l}$ for some constant $c'$ which depends only on $\Gamma$. Now Lemma 3.1 implies that, for each $i$, we can find $\alpha_i, \beta_i \in \mathcal{O}_K$ with $\pm \text{tr}\hat{\omega_i}=\pm \alpha_i/\beta_i$ such that
\begin{equation*}
\prod_{i=1}^{r(l)}{|\text{N}{(\alpha_i-2\beta_i)}|}{|\text{N}{(\alpha_i+2\beta_i)}|} \leq (C_3^{2l})^{r(l)} \leq (C_3^{2l})^{c'e^{2 l}}.
\end{equation*}
\begin{claim}
If $l\rightarrow \infty$, then $r(l)\rightarrow \infty$ and $\prod_{i=1}^{r(l)}{|\text{N}{(\alpha_i-2\beta_i)}|}{|\text{N}{(\alpha_i+2\beta_i)}|}\rightarrow \infty$.
\end{claim}
\begin{proof}
Suppose $\prod_{i=1}^{r(l)}{|\text{N}{(\alpha_i-2\beta_i)}|}{|\text{N}{(\alpha_i+2\beta_i)}|}$ is bounded as $l\rightarrow \infty$. Pick a rational prime $p$ such that $p$ doesn't divide the norm of any generators of $S$ and $p>\prod_{i=1}^{r(l)}{|\text{N}{(\alpha_i-2\beta_i)}|}{|\text{N}{(\alpha_i+2\beta_i)}|}$ for all $l$. If $P$ is a prime factor of $p\mathcal{O}_K$, then
\begin{equation}
\text{N}(P)\nmid \prod_{i=1}^{r(l)}{|\text{N}{(\alpha_i-2\beta_i)}|}{|\text{N}{(\alpha_i+2\beta_i)}|}
\end{equation}
and so, for all $i$,
\begin{equation}
\alpha_i\pm2\beta_i\notin P \Rightarrow (\alpha_i\pm 2\beta_i)/\beta_i=\text{tr}\hat{\omega}_i\pm 2\notin P_S \Rightarrow \omega_i\notin \Gamma_0(P_S)
\end{equation}
where $\Gamma_0(P_S)$ is a Hecke-type congruence subgroup of $\Gamma$. This implies $\Gamma_0(P_S)$ doesn't contain any elements of $\Gamma$, contradicting to the fact that $\Gamma_0(P_S)$ is a finite index subgroup of $\Gamma$.
\end{proof}

By the above claim and Corollary 2.5, if $l$ is sufficiently large, then there exists a prime $p$ such that $p\nmid \prod_{i=1}^{r(l)}|{\text{N}{(\alpha_i-2\beta_i)}}|{|\text{N}{(\alpha_i+2\beta_i)}|}$, $p$ splits completely over $K$ and
\begin{equation*}
p<2n\log \left(\prod_{i=1}^{r(l)}{|\text{N}{(\alpha_i-2\beta_i)}|}{|\text{N}{(\alpha_i+	2\beta_i)}|}\right)<2n\log (C_3^{2c'le^{2 l}})
\end{equation*}
where $n$ is the degree of the Galois closure of $K$ as we previously defined in Section 2.4. Define $p(l)$ to be the smallest prime which satisfies the above conditions for the given $l$. Then, by the same reasoning as in the proof of Claim 3.3, we have $p(l)\rightarrow \infty$ as $l\rightarrow \infty$.

Now let's assume $l$ is sufficiently large so that $p(l)$ doesn't divide the norm of any generators of $S$ and any prime factor of $p(l)\mathcal{O}_S$ gives a surjection in (2.2). Set $P_S$ to be one of the prime factors of $p(l)\mathcal{O}_S$, $\Gamma_1(P_S)=$ Hecke-type congruence subgroup and $M_1(P_S)=$ corresponding cover of $\Gamma_1(P_S)$. Since $P_S$ satisfies (3.2) and (3.3), the length of a shortest closed geodesic of $\Gamma_1(P_S)$ is bigger than $l$ and so
\begin{equation*}
\text{Heegaard genus of }M_1(P_S)\geq \dfrac{1}{4}e^{l/2}.
\end{equation*}
As $\text{N}(P_S)$ is equal to $p(l)$, the degree $[\Gamma:\Gamma_1(P_S)]$ is less than
\begin{equation*}
\dfrac{1}{2}(p(l))^2
\end{equation*}
by (2.10) and, thus,
\begin{equation*}
\dfrac{1}{2}(p(l))^2<\dfrac{1}{2}(2n\log (C_3^{2c'le^{2 l}}))^{2}=2n^2\log^{2}(C_3^{2c'le^{2 l}})=
8n^2(c'le^{2 l})^{2}\log^{2}C_3
\end{equation*}
because $p(l)<2n\log (C_3^{2c'le^{2l}})$. Now it is easy to check that, for any $\epsilon>0$,
\begin{equation*}
\dfrac{e^{l/2}}{4}\geq \left(8n^2(c'le^{2 l})^{2}\log^{2}C_3\right)^{\tfrac{1}{8}-\tfrac{\epsilon}{2}}
\end{equation*}
for sufficiently large $l$. This means
\begin{equation*}
\text{Heegaard genus of }M_1(P_S) \geq [\Gamma:\Gamma_1(P_S)]^{\tfrac{1}{8}-\tfrac{\epsilon}{2}}.
\end{equation*}
From the construction, it is clear that we can make $M_1(P_S)$ with arbitrary large degree and Heegaard genus.
\end{proof}
$\quad$\\
Note that, in the proof of Lemma 3.2, we actually showed
\begin{equation}
 \text{Heegaard genus of }M_1(P_S)\geq \dfrac{e^{t/2}}{4} \geq\left(\dfrac{1}{2}(\text{N}(P_S))^{2}\right)^{\tfrac{1}{8}-\tfrac{\epsilon}{2}}
\end{equation}
where $t$ is the length of a shortest closed geodesic in $M_1(P_S)$.\\

Now we go back to the proof of Theorem 1.2 and let's construct a tower of finite covers of $M$. First, consider a Hecke congruence subgroup $\Gamma_1(P_1)$ of a prime ideal $P_1$ of $\mathcal{O}_S$ which satisfies the inequality (3.1) for a given $\epsilon>0$. Next pick a prime ideal $P_2$ (from Lemma 3.2) with $\text{N}(P_2)$ sufficiently large such that it satisfies
\begin{equation}
  \left(\dfrac{1}{2}(\text{N}(P_2))^{2}\right)^{\tfrac{1}{8}-\tfrac{\epsilon}{2}}>\left(\dfrac{1}{2}(\text{N}(P_1P_2))^{2}\right)^{\tfrac{1}{8}-{\epsilon}}.
\end{equation}
If $M(P_1P_2)$ (resp. $M(P_2)$) denotes a corresponding manifold of a Hecke-type congruence subgroup $\Gamma_1(P_1P_2)$ (resp. $\Gamma_1(P_2)$), then the length of a shortest closed geodesic in $M_1(P_1P_2)$ is bigger than the length of a shortest geodesic in $M(P_2)$ because $\Gamma_1(P_1P_2) \subset \Gamma_1(P_2)$. Thus, by (3.4), the Heegaard genus of $M_1(P_1P_2)$ is at least $\left(\dfrac{1}{2}(\text{N}(P_2))^{2}\right)^{\tfrac{1}{8}-\tfrac{\epsilon}{2}}$. Since the degree of $\Gamma_1(P_1P_2)$ is less than $\dfrac{1}{2}(\text{N}(P_1P_2))^{2}$, from (3.5), we get
\begin{equation*}
\text{Heegaard genus of }M_1(P_1P_2)\geq[\Gamma:\Gamma_1(P_1P_2)]^{\tfrac{1}{8}-\epsilon}.
\end{equation*}
By induction, for $n \geq 2$, let's pick a prime ideal $P_{n+1}$ having sufficiently large $\text{N}(P_{n+1})$ so that it satisfies
\begin{equation*}
  \left(\dfrac{1}{2}(\text{N}(P_{n+1}))^{2}\right)^{\tfrac{1}{8}-\tfrac{\epsilon}{2}}>\left(\dfrac{1}{2}(\text{N}(P_1...P_{n+1}))^{2}\right)^{\tfrac{1}{8}-{\epsilon}}.
\end{equation*}
 Define $M_1(P_1...P_{n+1})$ to be the corresponding cover of the Hecke-type congruence subgroup $\Gamma_1(P_1...P_{n+1})$. Then, by (2.10), the degree of $\Gamma_1(P_1...P_{n+1})$ is less than $\dfrac{1}{2}(\text{N}(P_1...P_n))^{2}$ but the Heegaard genus of $M_1(P_1...P_{n+1})$ is at least $\left(\dfrac{1}{2}(\text{N}(P_{n+1}))^{2}\right)^{\tfrac{1}{8}-\tfrac{\epsilon}{2}}$ by the same reasoning we explained above. Hence,
\begin{equation*}
\text{Heegaard genus of }M_1(P_1...P_{n+1})\geq [\Gamma:\Gamma_1(P_1...P_{n+1}))]^{\tfrac{1}{8}-\epsilon}.
\end{equation*}
$\Gamma_1(P_1),~\Gamma_1(P_1P_2)...$ gives a desired sequence of congruence covers for Theorem 1.2. This completes the proof of Theorem 1.2 for the general case.\\

(\emph{Arithmetic case}) If $M$ is arithmetic, then there exists a cover $M'$ with fundamental group $\Gamma'$ such that $\Gamma'$ is a subgroup of a maximal order of a quaternion algebra (see Chapter 8 in \cite{mr}). In this case, it is proved in \cite{mark} that the number of distinct complex lengths of real length less than or equal to $l$ in $M'$ is bounded by $c'' e^l$ where $c''$ is a constant depending only on $M'$. Applying the bound $c'' e^l$ instead of $c'e^{2l}$, we can check that Lemma 3.2 and the above construction of sequence of covers still hold for $M'$ with the reduced exponents $\dfrac{1}{4}-\dfrac{\epsilon}{2}$ and $\dfrac{1}{4}-\epsilon$ instead of $\dfrac{1}{8}-\dfrac{\epsilon}{2}$, $\dfrac{1}{8}-\epsilon$ respectively. In other words, for any $\epsilon>0$, $M'$ has a sequence of congruence covers $\{\Gamma_i\}$ such that
\begin{equation*}
\text{Heegaard genus of the cover induced by }\Gamma_i \geq [\Gamma:\Gamma_i]^{\tfrac{1}{4}-\epsilon}
\end{equation*}
with arbitrary large $[\Gamma:\Gamma_i]$ for each $i$.

Using this, we show $M$ has the same property. For any $\epsilon>0$, first pick $\epsilon'>0$ which is smaller than $\epsilon$ and construct a tower of Hecke-type congruence subgroups $\{\Gamma'_i\}$ of $\Gamma'$ that satisfies
\begin{equation*}
[\Gamma':\Gamma'_i]^{\tfrac{1}{4}-\epsilon'}\geq [\Gamma:\Gamma'_i]^{\tfrac{1}{4}-\epsilon}=([\Gamma:\Gamma'][\Gamma':\Gamma'_i])^{\tfrac{1}{4}-\epsilon}
\end{equation*}
and
\begin{equation*}
\text{Heegaard genus of the cover induced by }\Gamma'_i \geq ([\Gamma':\Gamma'_i])^{\tfrac{1}{4}-\epsilon'}.
\end{equation*}
for each $i$. The existence of $\{\Gamma'_i\}$ is guaranteed by the earlier discussion. Clearly $\{\Gamma'_i\}$ satisfies
\begin{equation*}
\text{Heegaard genus of the cover induced by }\Gamma'_i \geq ([\Gamma:\Gamma'_i])^{\tfrac{1}{4}-\epsilon}.
\end{equation*}
This gives a desired tower of finite covers of $M$ for the given $\epsilon$.\\
\\
\emph{Proof of Theorem 1.4} Let $\epsilon>0$ be fixed. In the proof of Lemma 3.2, we used the fact that
\begin{equation*}
\dfrac{e^{l/2}}{4}\geq \left(8n^2(c'le^{2 l})^{2}\log^{2}C_3\right)^{\tfrac{1}{8}-\tfrac{\epsilon}{2}}
\end{equation*}
holds for sufficiently large $l$. Changing the inequality slightly so that, for any given $0<c_1<\pi/2$, if we write
\begin{equation}
e^{l/2}\geq \left(8n^2(c'le^{2 l})^{2}\log^{2}C_3\right)^{\tfrac{1}{8}-\tfrac{\epsilon}{2}} \left(\geq [\Gamma:\Gamma_1(P_S)]^{\tfrac{1}{8}-\tfrac{\epsilon}{2}}\right)
\end{equation}
and
\begin{equation}
c_1 e^{l}\geq \left(\text{vol}(M)~8n^2(c'le^{2 l})^{2}\log^{2}C_3\right)^{\tfrac{1}{4}-\epsilon}\left(\geq \text{vol}(M(P_S))^{\tfrac{1}{4}-\epsilon}\right),
\end{equation}
then the above inequalities are also true for all sufficiently large $l$. The volume of hyperbolic ball of radius $r$ is
\begin{equation*}
\pi \left(\sinh (2r)-2r\right)
\end{equation*}
and it is bounded below by $c_1 e^{2r}$ when $r$ is sufficiently large. Now the result follows from this, (3.6), (3.7), and the similar steps in the proof of Lemma 3.2. If $M$ is arithmetic, then we can get the desired one by replacing the exponents $\dfrac{1}{8}-\dfrac{\epsilon}{2}$ in (3.6) and $\dfrac{1}{4}-\epsilon$ in (3.7) with $\dfrac{1}{4}-\dfrac{\epsilon}{2}$, $\dfrac{1}{2}-\epsilon$ respectively.

\section{Proof of Lemma 3.1}

First we introduce definitions and some preliminaries which are necessary to develop the proof of Lemma 3.1. Let $R$ be a finite set of generators of $\Gamma$. The \emph{minimal word length} of $\omega \in \Gamma$ is defined to be
\begin{equation*}
\text{min} \{|t_{1}|+...+|t_{k}|~|~\omega=\gamma_{1}^{t_{1}}...\gamma_{k}^{t_{k}}~\text{and} ~\gamma_{i}\in R~\text{for}~1\leq i \leq k\}.
\end{equation*}
According to \cite{mir}, since $M$ is compact, the Cayley graph of $\Gamma$ with respect to $R$ is quasi-isometric to its universal cover $\mathbb{H}^3$ and so the minimal word length of $\omega$ is bounded by $c'l$ where $l$ is the length of $\omega$ and $c'$ is a positive constant which depends only on $\Gamma$.

Let $G$ be the finite set of Galois embeddings of the number field $K$ in $\mathbb{C}$ and
\begin{equation*}
 C_1=\text{max}\Big\{1,~\text{max}\left\{\left.|\sigma(a)|~\right|~a~\text{is an entry of}~\pm \hat{\gamma}~\text{for any}~\gamma \in R,~\sigma \in G \right\}\Big\}.
\end{equation*}
Note that for any $\gamma \in R$ the constant $C_1$ is also an upper bound for the absolute values of all Galois conjugates of all the entries of $\pm \hat{\gamma}^{{-1}}$ since the determinant of $\hat{\gamma}$ is equal to $1$. The following claim is important in the proof of Lemma 3.1.
\begin{claim}
If $k>0$ is the minimal word length of $\omega \in \Gamma$ with respect to the generating set $R$ and $\pm \hat{\omega} =\pm \left( {\begin{array}{cc}
 \omega_1 & \omega_2  \\
 \omega_3 & \omega_4  \\
 \end{array} } \right) \in \hat{\Gamma}$ where $\omega_j \in \mathcal{O}_S$, then $|\sigma(\omega_j)|$ is bounded by $2^{k-1}C_1^k$ for all $j$ and $\sigma \in G$.
\end{claim}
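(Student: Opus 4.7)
The plan is to prove Claim 4.1 by induction on the minimal word length $k$, exploiting the fact that each Galois embedding $\sigma \in G$ is a ring homomorphism and therefore commutes with matrix multiplication (entry by entry).

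For the base case $k=1$, the element $\omega$ equals $\gamma$ or $\gamma^{-1}$ for some $\gamma \in R$, so by the very definition of $C_1$ (and the remark that $C_1$ also bounds the Galois conjugates of entries of $\pm\hat{\gamma}^{-1}$), we have $|\sigma(\omega_j)| \leq C_1 = 2^{0} C_1^{1}$ for every entry and every $\sigma$.

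For the inductive step, suppose the bound holds for all elements of minimal word length at most $k-1$. Given $\omega$ of minimal word length $k$, write $\omega = \omega' \cdot \eta$ where $\eta \in R \cup R^{-1}$ and $\omega'$ has minimal word length at most $k-1$ (this is just dropping the last letter from a geodesic word representing $\omega$). Setting $\pm\hat{\omega}' = \pm\begin{pmatrix} \omega'_1 & \omega'_2 \\ \omega'_3 & \omega'_4 \end{pmatrix}$ and $\pm\hat{\eta} = \pm\begin{pmatrix} \eta_1 & \eta_2 \\ \eta_3 & \eta_4 \end{pmatrix}$, matrix multiplication expresses each entry of $\pm\hat{\omega}$ as a sum of two products of an entry of $\hat{\omega}'$ with an entry of $\hat{\eta}$. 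Applying any $\sigma \in G$ and using the triangle inequality together with multiplicativity of $\sigma$, each such entry satisfies
\begin{equation*}
|\sigma(\omega_j)| \;\leq\; 2 \cdot (2^{k-2} C_1^{k-1}) \cdot C_1 \;=\; 2^{k-1} C_1^{k},
\end{equation*}
where the inductive hypothesis bounds the conjugates of entries of $\hat{\omega}'$ and the definition of $C_1$ bounds those of $\hat{\eta}$ (whether $\eta \in R$ or $\eta \in R^{-1}$).

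There is essentially no obstacle here beyond bookkeeping; the only subtlety worth mentioning is the legitimacy of the decomposition $\omega = \omega' \cdot \eta$ with $\omega'$ of minimal word length $\leq k-1$, which follows immediately from the definition of the minimal word length (remove the last letter from a shortest factorization). The factor of $2$ per multiplication comes from the two summands in each entry of a $2 \times 2$ matrix product, and this is exactly what produces the $2^{k-1}$ in the stated bound.
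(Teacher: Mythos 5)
Your proof is correct and follows essentially the same route as the paper: induction on word length, peeling off one generator (you peel from the right, the paper from the left, but this is immaterial), expanding the $2\times 2$ matrix product to get two summands per entry, and applying the triangle inequality together with the multiplicativity of each $\sigma \in G$ and the definition of $C_1$ (including the remark that it also bounds entries of $\pm\hat{\gamma}^{-1}$). No substantive difference from the paper's argument.
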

\begin{proof}
We induct on $k$. First the case $k=1$ is clear. Suppose $k\geq 2$ and the claim is true for all $i \leq k-1$. Since $k\geq 2$, $\omega$ has one of the forms $\gamma \omega'$ or $~\gamma^{-1}\omega'$ where $\omega'$ has word length $k-1$ and $\gamma\in R$. We will prove only the case $\omega=\gamma\omega'$ because the other case is similar.
If
\begin{align*}
\pm \hat{\gamma}=\pm \left( {\begin{array}{cc}
 a & b  \\
 c & d  \\
 \end{array} } \right),~
\pm \hat{\omega'}=\pm \left( {\begin{array}{cc}
 \omega'_1 & \omega'_2  \\
 \omega'_3 & \omega'_4  \\
 \end{array} } \right),
 \end{align*}
then
\begin{align*}
\pm \hat{\omega}=\pm \left( {\begin{array}{cc}
 a & b  \\
 c & d  \\
 \end{array} } \right)
\left( {\begin{array}{cc}
 \omega'_1 & \omega'_2  \\
 \omega'_3 & \omega'_4  \\
 \end{array} } \right)=
\pm \left( {\begin{array}{cc}
 a\omega'_1+b\omega'_3 & a\omega'_2+b\omega'_4  \\
 c\omega'_1+d\omega'_3 & c\omega'_2+d\omega'_4  \\
 \end{array} } \right).
\end{align*}
Focusing on the upper left entry of $\pm\hat{\omega}$, we have
\begin{equation}\tag{4.1}
|\sigma(a\omega'_1+b\omega'_3)|\leq |\sigma(a)||\sigma(\omega'_1)|+|\sigma(c)||\sigma(\omega'_3)|.
\end{equation}
Since
\begin{equation*}
|\sigma(\omega'_1)|,~|\sigma(\omega'_3)|\leq 2^{k-2}C_1^{k-1}
\end{equation*}
for all $\sigma \in G$ by induction, we get
\begin{equation*}
|\sigma(\omega_1)|\leq (2^{k-2}C_1^{k-1})(C_1)+(2^{k-2}C_1^{k-1})(C_1)=2^{k-1}C_1^k.
\end{equation*}
for all $\sigma \in G$. The same estimate holds for the other entries of $\pm \hat{\omega}$, proving the claim.
\end{proof}

We now prove Lemma 3.1, so let $\omega \in \Gamma $ have length at most $l$ as stated in the lemma. By Claim 4.1, $|\sigma(\pm \text{tr}\hat{\omega})|$ is bounded by $2^{c'l-1}C_1^{c'l}+2^{c'l-1}C_1^{c'l}=2^{c'l}C_1^{c'l}$ for all $\sigma \in G$. Since $R$ is a finite set, we can find a common denominator $\beta'\in \mathcal{O}_K$ of all the entries of images of elements of $R$ in $SL_2(O_S)$. That is, there exists $\beta'\in \mathcal{O}_K$ such that, for any $\gamma \in R$, $\pm\hat{\gamma}$ can be represented in the following form
\begin{equation*}
\pm \left( {\begin{array}{cc}
 \alpha_{1}/\beta' & \alpha_{2}/\beta'  \\
 \alpha_{3}/\beta' & \alpha_{4}/\beta'  \\
 \end{array} } \right)
\end{equation*}
where the $\alpha_{i} \in \mathcal{O}_K$ depend on $\gamma$. If we put
\begin{equation*}
C_2=\text{max}\big\{|\sigma(\beta')|~\big|~\sigma \in G \big\},
\end{equation*}
then, as $\omega$ has word length at most $c'l$, we get $\alpha,~\beta \in \mathcal{O}_K$ such that $\pm\text{tr} \hat{\omega} =\pm\alpha/\beta$ and $|\sigma(\beta)|$ is bounded by $C_2^{c'l}$ for all $\sigma \in G$.
Since
\begin{equation*}
|\sigma(\alpha\pm 2\beta)|=|\sigma(\text{tr}\hat{\omega})\pm 2||\sigma(\beta)|,
\end{equation*}
$|\sigma(\alpha\pm 2\beta)|$ is bounded by
\begin{equation*}
(2^{c'l}C_1^{c'l}+2)C_2^{c'l}=(2C_1C_2)^{c'l}+2C_2^{c'l} \leq 2(2C_1C_2)^{c'l}
\end{equation*}
for all $\sigma \in G$. Because
\begin{equation*}
\text{N}{(\alpha\pm 2\beta)}\leq ({\text{max}\{|\sigma(\alpha\pm 2\beta)|~|~\sigma \in G\}})^m
\end{equation*}
where $m=[K:\mathbb{Q}]$, we get
\begin{equation*}
\text{N}{(\alpha\pm 2\beta)} \leq 2(2C_1C_2)^{c'lm}.
\end{equation*}
Since $l$ is the hyperbolic length of a closed manifold $M$, it is bounded below by the length $s$ of the shortest closed geodesic. Now it is straightforward to check there exists $C_3\geq 1$ such that
\begin{equation*}
(C_3)^{l}>2(2C_1C_2)^{c'lm}
\end{equation*}
for all $l\geq s$. This completes the proof of the lemma.
\section{A technical lemma for Theorem 1.3}
Now we embark on the proof of Theorem 1.3. The key idea of the proof of Theorem 1.3 is simpler than the proof of Theorem 1.2. We'll show that in the arithmetic non-compact case, the upper injectivity radius of a Hecke-type congruence subgroup is always bounded by a function of its degree.
First recall that if $M$ is an arithmetic non-compact manifold, then it has a finite cover $M'$ such that its fundamental group $\Gamma'$ is a subgroup of a \emph{Bianchi group} $PSL_2(\mathcal{O}_n)$ where $\mathcal{O}_n$ is an imaginary quadratic number field (Chapter 8 in \cite{mr}). The following lemma will provide a way to get a lower bound of upper injectivity radii of the congruence subgroups of $\Gamma'$. Although the hypotheses of the lemma may seem artificial, they are satisfied for the congruence subgroups (of $\Gamma'$) under consideration as we will explain after the proof.

\begin{lemma}
Let $\Gamma''$ be a subgroup of $PSL_2({\mathbb{C}})$ where every element that fixes $\infty \in \partial\mathbb{H}^3_{\infty}$ is parabolic. Suppose there are positive constants $C_1$ and $C_2$ so that:
\\
\\
\emph{(a)} When $\gamma=\left( {\begin{array}{cc}
 a & b  \\
 c & d  \\
 \end{array} } \right) \in \Gamma''$ does not fix $\infty$, the entry $c$ satisfies $|c| \geq C_1$.\\
\emph{(b)} When $\gamma$ is a nontrivial element which fixes $\infty$, and so of the form $\left( {\begin{array}{cc}
 1 & b  \\
 0 & 1  \\
 \end{array} } \right)$, the entry $b$ satisfies $|b| \geq C_2 $.\\
\\
Then there exists $\zeta \in \mathbb{H}^3$ such that, for every nontrivial element $\gamma \in \Gamma''$,
\begin{equation*}
\cosh{d_{\mathbb{H}^3}(\gamma(\zeta),\zeta)} \geq \dfrac{C_1C_2}{2}.
\end{equation*}
\end{lemma}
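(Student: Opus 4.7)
The plan is to take $\zeta$ on the vertical geodesic above the origin in the upper half-space model of $\mathbb{H}^3$, at a height tuned to the constants $C_1, C_2$. Specifically, writing points of $\mathbb{H}^3$ as $z + sj$ in quaternion notation, I would set $\zeta = tj$ with $t = \sqrt{C_2/C_1}$. This value of $t$ is chosen to balance the two hypotheses against each other: hypothesis (a) controls the lower-left entry $c$ and so pushes one toward small $t$ (so that the term $|c|^2 t^2$ stays small relative to $C_1 C_2$), while hypothesis (b) concerns the translation length of parabolics fixing $\infty$ and pushes one toward large $t$ (so that $|b|^2/t^2$ stays small); the geometric mean $\sqrt{C_2/C_1}$ is the unique choice that makes both sides work out simultaneously.

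The first step is to establish the standard displacement formula
$$
\cosh d_{\mathbb{H}^3}(tj,\gamma(tj)) \;=\; \frac{|a|^2+|d|^2+|b|^2/t^2+|c|^2 t^2}{2}.
$$
I would derive this in two stages. First, a direct quaternionic computation gives $\gamma(j) = (a\bar c + b\bar d)/(|c|^2+|d|^2) + j/(|c|^2+|d|^2)$, and combining this with the general identity $\cosh d(P,Q) = 1 + |P-Q|^2/(2\,\mathrm{Im}(P)\,\mathrm{Im}(Q))$ together with $|ad-bc|=1$ yields $\cosh d_{\mathbb{H}^3}(j,\gamma(j)) = \tfrac12(|a|^2+|b|^2+|c|^2+|d|^2)$. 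Second, conjugating $\gamma$ by the diagonal isometry $\mathrm{diag}(t^{-1/2},t^{1/2})$, which sends $tj$ to $j$ and replaces the entries $(a,b,c,d)$ by $(a,b/t,ct,d)$, promotes the formula at $j$ to the displayed formula at $tj$.

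The second step is the case split dictated by the hypotheses. If $\gamma \in \Gamma''$ does not fix $\infty$, then by (a), $|c| \geq C_1$, and dropping the remaining nonnegative terms in the distance formula yields
$$
\cosh d_{\mathbb{H}^3}(\zeta,\gamma(\zeta)) \;\geq\; \tfrac12\,|c|^2 t^2 \;\geq\; \tfrac12\,C_1^2\cdot (C_2/C_1) \;=\; \tfrac{C_1 C_2}{2}.
$$
If instead $\gamma$ is a nontrivial element fixing $\infty$, the standing assumption that such elements are parabolic forces $a = d = \pm 1$ and $c = 0$; hypothesis (b) gives $|b| \geq C_2$, so
$$
\cosh d_{\mathbb{H}^3}(\zeta,\gamma(\zeta)) \;=\; 1 + \tfrac{|b|^2}{2t^2} \;\geq\; 1 + \tfrac{C_2^2}{2(C_2/C_1)} \;=\; 1 + \tfrac{C_1 C_2}{2} \;\geq\; \tfrac{C_1 C_2}{2}.
$$

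Both cases give the desired lower bound, completing the argument. There is no deep input here: once the displacement formula at $tj$ is in hand, the rest is elementary. The only real obstacle is computational care in the quaternion arithmetic (tracking the relation $jz = \bar z j$ and signs when expanding $\gamma(j)$), and the one conceptual insight is recognizing that $t^2 = C_2/C_1$ is forced by the AM–GM balance between the $|b|^2/t^2$ and $|c|^2 t^2$ terms at the extremes $|b| = C_2$ and $|c| = C_1$.
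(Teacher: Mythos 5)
Your proof is correct and takes essentially the same approach as the paper: both choose $\zeta = \sqrt{C_2/C_1}\,\mathrm{j}$, compute the displacement of $\zeta$ under $\gamma$, and split into the cases of $\gamma$ fixing or not fixing $\infty$. The only difference is cosmetic: you use the symmetric displacement formula $\cosh d(t\mathrm{j},\gamma(t\mathrm{j}))=\tfrac{1}{2}\bigl(|a|^2+|d|^2+|b|^2/t^2+|c|^2t^2\bigr)$ obtained by conjugating to $\mathrm{j}$, from which each case follows by simply dropping nonnegative terms, whereas the paper's expansion in terms of $|c\zeta+d|^2$ requires one extra algebraic step to extract the bound $\geq |c\zeta+d|^2/2$ in case (a).
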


\begin{proof}
Throughout the proof, we will be working in the upper-half space model. Let $\zeta= t~\text{j}$ where $t=\left(\dfrac{C_2}{C_1}\right)^{1/2}$ and $\text{j}$ represents the vertical axis. Then it will be shown that this has the desired property. By well known formulas \cite{mat}, for $\gamma=\left( {\begin{array}{cc}
 a & b  \\
 c & d  \\
 \end{array} } \right)$, we have
\begin{equation*}
\gamma(\zeta)=\dfrac {{b\overline{d}+a\overline{c}t^2+t~\text{j}}}{|c\zeta +d|^{2}}
\end{equation*}
and
\allowdisplaybreaks\begin{align}
&\cosh {d_{\mathbb{H}^3}(\gamma(\zeta),\zeta)} =\frac {|\gamma(\zeta)-\zeta|^{2}}{\frac {2t^{2}}{|c\zeta +d|^{2}}}+1 \notag\\
&=\frac{|\gamma(\zeta)-\zeta|^{2}|c\zeta +d|^{2}}{2t^{2}}+1 \notag\\
&=\frac{|(b\overline{d}+a\overline{c}t^{2}+t~\text{j})-t~\text{j}|c\zeta +d|^{2}|^{2}}{2t^{2}|c\zeta +d|^{2}}+1\notag\\
&=\frac{|b\overline{d}+a\overline{c}t^{2}+t~\text{j}(1-|c\zeta +d|^{2})|^{2}}{2t^{2}|c\zeta +d|^{2}}+1\notag\\
&=\frac{|b\overline{d}+a\overline{c}t^{2}|^{2}}{2t^{2}|c\zeta +d|^{2}}+\dfrac{(1-|c\zeta +d|^{2})^{2}}{2|c\zeta +d|^{2}}+1.
\end{align}

First, consider the case (a). Since $c\zeta=ct~\text{j}$, we get $|c\zeta+d|^{2}\geq t^{2}|c|^{2}$. Clearly (5.1) is bigger than $\dfrac{|c\zeta +d|^{2}}{2}$ so that it is bounded below by $\dfrac{1}{2}t^{2}C_1^{2}=\dfrac{C_1C_2}{2}$ and $\cosh {d_{\mathbb{H}^3}(\gamma(\zeta),\zeta)}$ is as well.

Second, consider the case (b). In this case, we can rewrite $\cosh {d_{\mathbb{H}^3}(\gamma(\zeta),\zeta)}$ as
\begin{equation*}
\frac{b^2}{2t^{2}}+1.
\end{equation*}
Obviously this is bounded below by $\dfrac{C_2^2}{{2t^{2}}}=\dfrac{C_1C_2}{2}$. The lemma is proved.
\end{proof}
$\quad$

Now we apply the above lemma to congruence subgroups of $\Gamma'$. First, let $\Gamma'_0(I)$ be a Hecke-type congruence subgroup of $\Gamma'$ and $\gamma=\left( {\begin{array}{cc}
 a & b  \\
 c & d  \\
 \end{array} } \right) \in \Gamma'_0(I)$ where $I$ is an ideal of $\mathcal{O}_n$. By the definition of the Hecke-type congruence subgroup we have $c\in I$ and so $\text{N}(c)=|c|^2 \geq \text{N}(I)$ (when $c\neq 0$). Clearly $|b|\geq 1$ if $b\neq0$. Furthermore, if $\gamma$ fixes $\infty$ (so $c=0$), then $|a+d|\leq 2$ since $a$ and $d$ are conjugates of each other and both are units in the ring of integers in an imaginary quadratic number field. Therefore $\Gamma'_0(I)$ satisfies all the conditions of the lemma, thus, there exists $\zeta \in \mathbb{H}^3$ such that $\cosh {d_{\mathbb{H}^3}(\gamma(\zeta),\zeta)}$ is bounded below by $\dfrac{1}{2} \text{N}(I)^{1/2}$ for all nontrivial $\gamma \in \Gamma'_0(I)$. By similar method, for a Hecke-type congruence subgroup $\Gamma'_1(I)$ (resp. principal congruence subgroup $\Gamma'(I)$), we get $\cosh {d_{\mathbb{H}^3}(\gamma(\zeta),\zeta)}$ is bounded below by $\dfrac{1}{2} \text{N}(I)^{1/2}$ (resp. $\dfrac{1}{2} \text{N}(I)$) for all nontrivial $\gamma \in \Gamma'_1(I)$ (resp. $\Gamma'(I)$).
$\quad$
\section{Proof of Theorem 1.3}
If $P$ is a prime ideal of $\mathcal{O}_n$ and $M'_0(P)$ is a congruence cover of $M'$ induced by a Hecke-type congruence subgroup $\Gamma'_0(P)$ of $\Gamma'$, then the upper injectivity radius $\overline{\text{inj}}(M'_0(P))$ is bigger than
\begin{equation}
\dfrac{1}{2}\cosh ^{-1}\left(\dfrac{1}{2}\text{N}(P)^{1/2}\right)
\end{equation}
by Lemma 5.1 and the observation at the end of Section 5. From
\begin{equation*}
\cosh^2 x =\dfrac{1}{2}(\cosh (2x)+1),
\end{equation*}
we get
\begin{equation*}
\cosh\left(\dfrac{1}{2}\cosh ^{-1}(x)\right)=\sqrt{\dfrac{x+1}{2}} > \sqrt{\dfrac{x}{2}}
\end{equation*}
for $x\geq 1$. Combining this with Corollary 2.2 and (6.1), we have
\begin{equation}
\text{Heegaard genus of}~M'_0(P) \geq \dfrac{\text{N}(P)^{1/4}}{4}.
\end{equation}
Assuming $\text{N}(I)$ is sufficiently large, then the map in (2.8) is surjective and so, by the formula given in (2.9), the degree
\begin{equation*}
[\Gamma:\Gamma'_0(P)]=[\Gamma:\Gamma'][\Gamma':\Gamma'_0(P)]
\end{equation*}
is equal to $d(\text{N}(P)+1)$ where $d=[\Gamma:\Gamma']$. Furthermore, for any $\epsilon>0$, we can easily check that
\begin{equation*}
\dfrac{\text{N}(P)^{1/4}}{4} \geq (d(\text{N}(P)+1))^{\tfrac{1}{4}-\epsilon}
\end{equation*}
holds for sufficiently large $\text{N}(P)$.
This concludes
\begin{equation*}
\text{Heegaard genus of }M'_0(P)\geq ([\Gamma:\Gamma'_0(P)])^{\tfrac{1}{4}-\epsilon}
\end{equation*}
with sufficiently large $\text{N}(P)$.

For a given $\epsilon>0$, let $P_1,P_2,...$ be a sequence of prime ideals of $\mathcal{O}_n$ such that each $M'_0(P_i)$ satisfies the above condition $\dfrac{\text{N}(P_i)^{1/4}}{4} \geq (d(\text{N}(P_i)+1))^{\tfrac{1}{4}-\epsilon}$. If we put $\Gamma_i=\Gamma'_0(P_1...P_i)$ and $M_i=$ cover of $M'$ induced by $\Gamma_i$,  then
\begin{equation*}
\text{Heegaard genus of }M_i \geq \dfrac{\text{N}(P_1..P_i)^{1/4}}{4}
\end{equation*}
by the same method we used to get (6.2). Since
\begin{equation*}
\dfrac{\text{N}(P_1..P_i)^{1/4}}{4} \geq (d(\text{N}(P_1)+1)...(\text{N}(P_i)+1))^{\tfrac{1}{4}-\epsilon}
\end{equation*}
by assumption, the inequality
\begin{equation*}
\text{Heegaard genus of }M_i \geq (d(\text{N}(P_1)+1)...(\text{N}(P_i)+1))^{\tfrac{1}{4}-\epsilon}
\end{equation*}
follows for all $i$. Now the sequence $\{\Gamma_i\}$ is the desired one for Theorem 1.3.
\section{Proof of Theorem 1.5}
Throughout the proof, we shall suppose that $I$ is a square-free ideal that is not divisible by any of the prime ideals for which the map in (2.8) is not surjective. Under this assumption, we apply the explicit formulas (2.3), (2.9) and (2.10).\\
(i) As we saw in the proof of Theorem 1.3, the cover $M'_0(I)$ contains a ball $B$ of radius bigger than or equal to $\dfrac{1}{2}\cosh ^{-1}\left(\dfrac{1}{2}\text{N}(I)^{1/2}\right)$ $>0$. Since
\begin{equation*}
\cosh^{-1}\dfrac{x}{2}=\ln \left(\dfrac{x+\sqrt{x^2-4}}{2}\right)
\end{equation*}
when $x\geq 2$, $\cosh^{-1}\dfrac{x}{2}\geq0$, we get
\begin{equation*}
\dfrac{1}{2}\cosh ^{-1}\left(\dfrac{1}{2}\text{N}(I)^{1/2}\right)=\dfrac{1}{2}\ln \left(\dfrac{\text{N}(I)^{1/2}+\sqrt{\text{N}(I)-4}}{2}\right)>\dfrac{1}{2}\ln\left(\text{N}(I)^{1/2}-1\right)
\end{equation*}
for $\text{N}(I)\geq 4$. The volume of hyperbolic ball of radius $r$ is
\begin{equation*}
\pi \left(\sinh (2r)-2r\right),
\end{equation*}
so, for $r$ sufficiently large, it is bounded below by a constant multiple of $e^{2r}$. This means the volume of $B$ is bigger than a constant multiple of $e^{\ln\left(\text{N}(I)^{1/2}-1\right)}=\text{N}(I)^{1/2}-1$ with sufficiently large $\text{N}(I)$. For convenience, we will simply assume that the volume of $B$ is bounded below by the constant multiple of $\text{N}(I)^{1/2}$.

Let $I=P_1P_2...P_s$ such that $P_i$ are distinct prime ideals and $\text{N}(P_i)=p_i^{n_i}$ where $p_i$ rational primes and $n_i=1$ or $2$ depending on $P_i$. By the formula in (2.9), the degree of $M'_0$ is equal to $(p_1^{n_1}+1)(p_2^{n_2}+1)...(p_s^{n_s}+1)$. Now, for any $\epsilon>0$ and $c>0$, the following inequality
\begin{equation*}
c(p_1^{n_1}p_2^{n_2}...p_s^{n_s})^{1/2}>\left((p_1^{n_1}+1)(p_2^{n_2}+1)...(p_s^{n_s}+1)\right)^{\tfrac{1}{2}-\epsilon}
\end{equation*}
holds for sufficiently large $\text{N}(I)=p_1^{n_1}p_2^{n_2}...p_s^{n_s}$, because
\begin{equation*}
\dfrac{(p_i^{n_i})^{1/2}}{(p_i^{n_i}+1)^{\tfrac{1}{2}-\epsilon}}=(p_i^{n_i}+1)^{\epsilon}\left(\dfrac{p_i^{n_i}}{p_i^{n_i}+1}\right)^{1/2}
\end{equation*}
goes to infinity as $p_i$ increases. (i) is proved.
\\
\\
(ii) Because $\Gamma'_1(I) \subset \Gamma_0'(I)$, the cover $M'_1(I)$ (induces by $\Gamma'_1(I)$) also has a ball $B$ of radius at least $\dfrac{1}{2}\ln (\text{N}(I)^{1/2}-1)$ for $\text{N}(I)\geq 4$. As we checked in (i), the volume of this ball is bigger than the constant multiple of $\text{N}(I)^{1/2}$ for sufficiently large $\text{N}(I)$. Since the degree of $\Gamma'_1(I)$ is less than $\dfrac{1}{2}\text{N}(I)^2$, the statement in (ii) follows.
\\
\\
(iii) By the discussion at the end of Section 5,
\begin{equation*}
\cosh{d_{\mathbb{H}^3}(\gamma(\zeta),\zeta)} \geq \dfrac{1}{2}\text{N}(I)
\end{equation*}
for a principal congruence subgroup $\Gamma'(I)$ and any nontrivial $\gamma \in \Gamma'(I)$. Following the same way in (i), we can prove that the cover $M'(I)$ (induced by $\Gamma'(I)$) contains a ball $B$ of volume bounded below by the constant multiple of $\text{N}(I)$. Because the degree of $M'(I)$ is less than $\dfrac{1}{2}\text{N}(I)^3$ we arrive at the desired conclusion.
\section{Final Comments}
$\quad$\\
(1) In the proof of Theorem 1.2, we picked the prime number $p$ using Lemma 3.2. But we can choose a different prime directly from Lemma 3.1. By Lemma 3.1 (with the same notations in Section 3), for every $\omega \in \Gamma$ of length less than equal to $l$, there exist $\alpha, ~\beta \in \mathcal{O}_K$ such that $\text{tr}\hat{\omega} =\alpha/\beta$ and $\text{N} (\alpha \pm 2\beta) \leq C_3^l$. If we select a prime $p_1$ which is bigger than $C_3^l$ and smaller than $2C_3^l$, then, for a prime ideal factor $P_1$ of $p_1\mathcal{O}_K$, a Hecke-type congruence subgroup $\Gamma_1(P_1)$ does not contain any element of length less than or equal to $l$ because $\text{N}(P_1)\nmid\prod_{i=1}^{r(l)}{|\text{N}{(\alpha_i-2\beta_i)}||\text{N}{(\alpha_i+2\beta_i)}|}$ and the same reasoning in the proof of Claim 3.3. Now applying the Chebotarev's Density Theorem, pick a prime number $p'_1$ such that $p'_1$ splits completely in $\mathcal{O}_K$ and $p_1<p'_1<3p_1$. Since $p'_1$ also satisfies $p'_1\nmid \prod_{i=1}^{r(l)}{|\text{N}{(\alpha_i-2\beta_i)}||\text{N}{(\alpha_i+2\beta_i)}|}$, for any prime factor $P'_1$ of $p'_1\mathcal{O}_K$, a Hecke-type congruence subgroup $\Gamma_1(P'_1)$ does not contain any element of length less than or equal to $l$. Hence we get
\begin{equation*}
\text{Heegaard genus of the cover corresponding to } \Gamma_1(P'_1) \geq \dfrac{e^{l/2}}{4}
\end{equation*}
and, from (2.10),
\begin{equation*}
\text{Degree of }\Gamma_1(P'_1) <\dfrac{1}{2}{(p'_1)}^2< \dfrac{1}{2}(6C_3^l)^2
\end{equation*}
because $p'_1<3p_1$, $p_1<2C_3^{cl}$ and $\text{N}(P'_1)=p'_1$.

However the problem in this case is that we don't know exactly how big the constant $C_3$ is. In particular, the constant $C_3$ strongly depends on $\Gamma$. Thus, the result coming from the above line of reasoning is weaker than the result obtained using Lemma 3.2 which is universally independent of $\Gamma$.
\\
\\
(2) The reader might wonder why we chose to work with the Hecke-type subgroup $\Gamma_1(P)$ instead of the principal congruence subgroup $\Gamma(P)$ in Theorem 1.2. In fact, using $\Gamma(P)$ gives $\text{Degree}^{1/12-\epsilon}$ as a lower bound of the Heegaard genus of the induced cover. Although, for a given $l$, $\Gamma(P)$ allows us to take a smaller upper bound on $\text{N}(P)$, it doesn't offset the increase of the degree. More precisely, if a hyperbolic element $\omega$ is contained in $\Gamma(P)$, then we have $\text{tr}\hat{\omega}\equiv \pm 2\in P^2$ (compare to the case $\text{tr}\hat{\omega} \equiv \pm 2 \in P$ when $\omega \in \Gamma_1(P)$) so that we can pick a rational prime $p$ with a loosened condition $p^2\nmid \prod_{i=1}^{r(l)}|{\text{N}{(\alpha_i-2\beta_i)}}||\text{N}{(\alpha_i+2\beta_i)}|$ rather than $p\nmid \prod_{i=1}^{r(l)}|{\text{N}{(\alpha_i-2\beta_i)}}||\text{N}{(\alpha_i+2\beta_i)}|$ in the argument after Claim 3.3. By slightly changing the proof of Corollary 2.5, it is not difficult to see that for sufficiently large $x$ there exist a prime number $p\in S_2$ such that $p^2\nmid x$ and $p<n \log x$ (compare to the case $2n\log x$ of $\Gamma_1(P)$). But, as we can check from the proof of Theorem 1.2, $n \log x$ doesn't improve the result that much. On the contrary, since the degree of $\Gamma(P)$ has a cube power of $\text{N}(P)$ as one of its term, the lower bound of the Heegaard genus of the cover decreases from $\text{Degree}^{1/8-\epsilon}$ to $\text{Degree}^{1/12-\epsilon}$ .
\\
\\
(3) Now we heuristically explore the limits of Theorem 2.1. For a given closed hyperbolic 3-manifold $M$, first find the maximum upper injectivity radius of $M$. If $r=\overline{\text{inj}}(M)$ and $B(r)$ is a hyperbolic ball of radius $r$ embedded in $M$, then by assuming
\begin{equation*}
 \text{vol}(M)=\text{vol}(B(r))
\end{equation*}
one can calculate the largest possible value of $r$. Since
\begin{equation*}
\text{vol}(M)=\text{vol}(B(r)) \approx \pi e^{2r},
\end{equation*}
for sufficiently large $\text{vol}(M)$, it follows that
\begin{equation*}
 r \approx \dfrac{1}{2} \ln \dfrac{\text{vol}(M)}{\pi}.
\end{equation*}
For the convenience of calculation, we simply assume $r=\dfrac{1}{2} \ln \dfrac{\text{vol}(M)}{\pi}$. Applying this value of $r$ to Theorem 2.1, we have
\begin{equation*}
\text{Heegaard genus of } M \geq \dfrac{1}{4\sqrt{\pi}}(\text{vol}(M))^{1/2}.
\end{equation*}

In conclusion, $1/2$ is the largest value for the exponent of $\text{vol} (M)$ in Theorem 1.2 that we can get using Theorem 2.1. Recalling Theorem 1.1 we can say that Theorem 2.1 would have to be improved substantially in the arithmetic case to give an alternative proof of Theorem 1.1.

\section{Acknowledgement}
I am extremely grateful to my advisor Nathan Dunfield for all his help. This paper woudn't have been possible without his support and encouragement. 
I am also very thankful to Paul Pollack and Jonah Sinick for many helpful comments. Lastly, I would like to extend my gratitude to the anonymous referee for careful reading and corrections.

\vspace{10 mm}

Department of Mathematics\

University of Illinois at Urbana-Champaign\

1409 W. Green Street, Urbana, IL 61801\\

\emph{Email Address}: jeon14@illinois.edu

\end{document}